\newtheorem{theorem}{Theorem}[section]
\newtheorem*{theorem*}{Theorem}
\newtheorem{lemma}[theorem]{Lemma}
\newtheorem*{lemma*}{Lemma}
\newtheorem{corollary}[theorem]{Corollary}
\newtheorem{entry}[theorem]{Entry}
\newtheorem*{corollary*}{Corollary}
\newtheorem{proposition}[theorem]{Proposition}
\theoremstyle{definition}
\newtheorem{example}[theorem]{Example}
\newtheorem{remark}[theorem]{Remark}
\newtheorem*{exercise*}{Exercise}
\def\multiset#1#2{\ensuremath{\left(\kern-.3em\left(\genfrac{}{}{0pt}{}{#1}{#2}\right)\kern-.3em\right)}}
\newcommand{\chain}[1][n]{\langle #1\rangle}
\newcommand{\nsomega}[2][n]{\Omega(#2,#1)}
\newcommand{\somega}[2][n]{\Omega^{\circ}(#2,#1)}
\newcommand{\zetaf}[1][X]{\mathfrak{Z}\left(#1\right)}
\newcommand{\zetafp}[1][X]{\mathfrak{Z}^+\left(#1\right)}
\newcommand{\zetafn}[1][X]{\mathfrak{Z}_{\mathbb{N}}^+\left(#1\right)}
\newcommand{\zetafpn}[1][X]{\mathfrak{Z}_{\mathbb{N}}\left(#1\right)}
\tikzstyle{vertex}=[circle, draw, inner sep=0pt, minimum size=6pt]
\title{A poset version of Ramanujan results on Eulerian numbers and zeta values}
\author[1]{{Eric R}
 {Dolores-Cuenca}$^1$}\email{$^1$eric.rubiel@yonsei.ac.kr}\address{$^1$Department of mathematics, {Yonsei University}, { {Seoul}, {Korea}}}
\author[2]{{Jose L. Mendoza-Cortes$^2$}}\email{$^2$jmendoza@msu.edu}\address{$^2$Department of Chemical
Engineering \& Materials
Science, Michigan State
University, East Lansing, MI
48824, USA}
\begin{document}

\maketitle

\begin{abstract} 
We explore the operad of finite posets and its algebras. We use order polytopes to investigate the combinatorial properties of zeta values. By generalizing a family of zeta value identities, we demonstrate the applicability of this approach. In addition, we offer new proofs of some of Ramanujan's results on the properties of Eulerian numbers, interpreting his work as dealing with series inheriting the algebraic structure of disjoint unions of points. Finally, we establish a connection between our findings and the linear independence of zeta values.
\end{abstract}

\section{Introduction}

During the final week of December 2021, the Mathematics Department at Yonsei University organized a screening of math-related films. Among them was ``The Man Who Knew Infinity'', a biopic about the life of Ramanujan. Following the screening, we headed to the library to peruse the copy \cite{R} of ``Ramanujan's Notebooks''.

Ramanujan's notebooks do not feature his original proofs. As per \cite{R}, there are various plausible reasons for this omission. Firstly, the notebooks were intended to be a compilation of his findings rather than a comprehensive guide. Secondly, given Ramanujan's financial struggles, the exorbitant cost of paper may have played a role. Additionally, it is feasible that Ramanujan emulated the style of Carr's book \cite{carr}, which presents theorems with only rough sketches of proofs. In an interview \cite{conv}, S. Janaki, Ramanujan's widow, revealed that her husband ``was fearful that English mathematicians would steal his mathematical secrets while in England''.

Chapter 5 of Ramanujan's first notebook left us astounded as we stumbled upon the following declaration:

\begin{entry}[{\cite[Chapter 5, Corollary 6]{R}}
]\label{Rm} Fix $n\in \mathbb{N}$. There are $\{A_i\}_{1\leq i\leq n}\in\mathbb{N}$ such that for all $1\leq r\leq n$:
\begin{eqnarray*}
\sum_{k=1}^r A_k{r\choose k}&=&r^n.
\end{eqnarray*}
\end{entry}

The aforementioned theorem can be derived directly from the theory proposed in our paper~\cite{posets}. In the formula, $r^n$ represents the number of ways to label the disjoint union of $n$ points using numbers between $1$ and $r$. The coefficient $A_k$ counts a particular set of $k$--simplices in a given triangulation of the $n$--cube. As we progressed through Chapter 5, we stumbled upon several additional outcomes that align with our perspective.

\subsubsection{What we do}

We introduce the action of an operad on zeta values. An operad of sets comprises a set of $n$-ary operations $O(n)$, with rules for composition and actions of the symmetric group. These rules of composition represent an abstraction of the composition of functions, while the action of the symmetric groups reflects the permutations of inputs. Consequently, an algebra over an operad $O$ is a set $X$ in which every element of $O(n)$ defines an operation $X^{n}\rightarrow X$. For precise definitions, refer to \cite{ope, atf}.

Consider an $n$-ary poset $P=\{x_1,\cdots,x_{n}, <_P\}$, which we interpret as an operation, and let $P_1,\ldots,P_n$ be posets. We define the action of $P$ on $P_1,\ldots,P_n$ to be the lexicographic sum of $P_1,\ldots,P_n$ over $P$. This means that we replace each vertex $x_i$  with the entire poset $P_i$, and then take the resulting poset with the induced ordering. In other words, the operad of finite posets acts on the set of finite posets.

The language of operads enables us to study algebraic properties that are inherent to objects that have a defined composition, even if these objects do not satisfy all the criteria of traditional algebraic structures like groups or rings.

Our main result is the proof of the following identity. Let $\zeta(k)=\sum_{n=1}^\infty \frac{1}{n^k}$ denote the Riemann zeta function. Given $k\in\mathbb{N}$, \begin{eqnarray} \sum_{n=k}^\infty (-1)^{n+1} {n\choose k}(\zeta(n+1)-1)&=&(-1)^{k+1}(\zeta(k+1)-1-1/2^{k+1}). \label{fEq:1}\end{eqnarray} This identity relates the alternating sum of a particular combination of binomial coefficients and zeta values to a closed-form expression involving the Riemann zeta function evaluated at integer arguments. Notably, when we attempted to compute the left-hand side of Equation~\eqref{fEq:1} using a variety of public and private software packages in January 2023, we were unable to obtain any output.

In the field of combinatorics, the order polynomial introduced by Stanley is a polynomial that is associated with a finite poset. It is well known that  binomial polynomials serve as a basis for order polynomials. Our work shows that order polynomials inherit the action of the operad of finite posets, and that Equation~\eqref{fEq:1} establishes a relationship between order polynomials and zeta values as algebras over the operad of posets. This allows us to induce an action of the operad of finite posets on zeta values. 

We establish a connection between the theory of order polytopes and zeta values, allowing us to compute results such as the following example: By considering the cube as the union of 6 pyramids $\Delta[3]$, we can use the inclusion-exclusion principle to account for the faces resulting from the intersection of two pyramids. Thus, we have $[0,1]^3=\sqcup_6 \Delta[3]\setminus (\sqcup_6\Delta[2])\sqcup \Delta[1]$, where the final term corresponds to the diagonal shared by all triangles $\Delta[2]$ and all pyramids $\Delta[3]$. From this geometric interpretation, we derive the identity
\[\sum_{k=1}^\infty \frac{6{k\choose 3}-6{k\choose 2}+{k\choose 1}}{x^{k+1}}=\sum_{k=1}^\infty   (-1)^{k+1}\frac{k^3}{x^{k+1}}=\frac{6}{(1-x)^{3+1}}-\frac{6}{(1-x)^{2+1}}+\frac{1}{(1-x)^{1+1}}, \]
which we express in finite form as
\begin{eqnarray*}
\sum_{k=1}^\infty   (-1)^{k+1}k^3(\zeta(k+1)-1)&=&{6}({\zeta(3+1)-1-\frac{1}{2^{3+1}}})-{6}({\zeta(2+1)-1-\frac{1}{2^{2+1}}})\\
&&+({\zeta(1+1)-1-\frac{1}{2^{1+1}}}).
\end{eqnarray*}
This illustrates the power of our approach in connecting seemingly disparate areas of mathematics.
 
We demonstrate a novel technique for generating identities of zeta values from specific triangulations of polytopes, establishing a connection between enumerative combinatorics and number theory. Our approach is illustrated through a detailed analysis of order polytopes, and we establish the validity of this assignment for this class of polytopes. Notably, our work expands upon the insights of Ramanujan. From our point of view, Ramanujan studied identities linked to $n$-cubes.
 
Once we identified generators in an algebra over an operad, the action of the operad can be described in the aforementioned basis via structural constants. The action of the operad of posets on order polytopes is simple to describe, we then compute the structural constants on polytopes and use those constants to describe the action of the operad on zeta values. 
Our theorems yield several results that are reminiscent of those discovered by Ramanujan. Specifically, when we apply our theorems to the disjoint union of $n$ points $P=\sqcup_n \chain[1]$, we recover some of Ramanujan's statements, which were previously proven in the book \cite{R}. Our proofs are distinct from those given in \cite{R} and, in some cases, shorter. To facilitate a comparison with Ramanujan's original work, we present his statements using the ``Entry'' environment, as shown in Entry~\ref{Rm}.

This paper is structured as follows: In Section~\ref{Sec:OS}, we delve into combinatorics. We begin by presenting Lemma~\ref{IEP}, which provides an interpretation of the basis vectors $\{{x\choose i}\}_{i\in\mathbb{N}}$ in terms of counting certain simplices in the canonical triangulation of the order polytope of $P$, this result generalizes~\cite[Proposition 3.6]{posets}. Next, in Theorem~\ref{thm:main}, we define the action of the operad of finite posets on Stanley order polynomials. Additionally, we introduce the concept of order series, a family of generating functions associated with a poset, and show that they possess the structure of an algebra over the operad of finite posets, extending the results of \cite[Proposition 2.3]{posets}. We also explore the poset $\{x<y>z<w\}$, which introduces a 4th-ary associative, non-commutative operation. A series parallel poset is generated by the action of the operations $\{x<y\}$, $\{x,y\}$ on chains $\chain[n]=\{1<2<\cdots<n\}$. Our work highlights that the action of $\{x<y>z<w\}$ is distinct from that of any poset generated by the operations $\{x<y\}$, $\{x,y\}$, which expands upon the results of \cite{posets}. 
We provide detailed explanations of our findings and include Lemma~\ref{Lemma:4} to support our claims.

In Section~\ref{sub:union}, we investigate posets that are formed by the disjoint union of points. Our analysis leads to new proofs for certain results originally presented by Ramanujan. Notably, our version of Corollary~\ref{Cor:up} is slightly more general than the one found in \cite{R}.

In Section~\ref{sec:Poset}, we demonstrate the relationship between {\cite[Chapter 5, Example 1,2.]{R}} and \cite[Chapter 5, Collolary 6 iv]{R} with the disjoint union of points. To extend these results to finite posets, we present Lemma~\ref{lemma:comp} and Theorem~\ref{theorem:R2}. Notably, Theorem~\ref{theorem:R2} yields a shorter proof of~\cite[{Chapter 5, Corollary 6. iv)}] {R} when the poset is limited to be the disjoint union of points. Our paper's primary contribution is Theorem~\ref{theorem:R2}, which establishes Equation~\eqref{fEq:1}. Moreover, the series of the form $\sum_{i=1}^\infty a_i(\zeta(i+1)-1), a_i\in\mathbb{Q}$, are commonly known as rational zeta series~\cite{compuzeta}. Our research expands the scope of rational zeta series that we can compute.

In Section~\ref{Sec:spz}, we explore two algebraic structures of order polynomials/order series: the vector space structure over $\mathbb{Z}$ and the algebraic structure over the operad of posets. In Corollary~\ref{CorZeta}, we demonstrate how the vector space structure allows us to define the action of the operad of posets on objects that are parameterized by binomial polynomials, such as zeta values. Furthermore, we introduce a function $\tilde{n}$ that maps ${x\choose k}$ to the value of Equation~\eqref{fEq:1}. We prove in Theorem~\ref{thm:li} that the injectivity of $\tilde{n}$ is equivalent to the linear independence of zeta values, which is a prominent unsolved problem in number theory.

Section~\ref{Sec:pa} delves into the potential connections between posets combinatorics and some of the statements presented in Ramanujan's notebooks. While no explicit connection has been established previously, we discuss which statements are likely to be explained with our approach.

\section{Order series}\label{Sec:OS}
 
In this section, we venture into the realm of enumerative combinatorics, building upon and generalizing the primary results of \cite{posets}. Throughout our exploration, we will illustrate our approach with explicit poset examples.

For our analysis, we focus solely on finite posets and denote the poset $1<2<\cdots<n$ as $\chain[n]$. Furthermore, we use $|P|$ to denote the number of elements in a given poset $P$.

In his work \cite{beginning}, Stanley introduced the weak order polynomials $\nsomega[n]{P}$ and the strict order polynomials $\somega[n]{P}$. Specifically, $\nsomega[n]{P}$ counts the number of non-strict order-preserving maps $P\rightarrow \chain[n]$ such that $x<y$ implies $f(x)\leq f(y)$, while $\somega[n]{P}$ counts the number of strict order-preserving maps $P\rightarrow \chain[n]$ such that $x<y$ implies $f(x)< f(y)$.

In our analysis, we will utilize the following renowned result established by Stanley:
\begin{theorem}[\cite{crt, enumerative}]\label{srt}
Let $P$ be a poset. Then, we have
\[(-1)^{|P|}\somega[-x]{P}=\nsomega[x]{P}.\]
This result is known as Stanley reciprocity. 
\end{theorem}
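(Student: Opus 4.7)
The plan is to deduce Stanley reciprocity from Ehrhart reciprocity applied to Stanley's order polytope
\[
\mathcal{O}(P) = \{f : P \to [0,1] \mid x \leq_P y \Rightarrow f(x) \leq f(y)\},
\]
a full-dimensional lattice polytope in $\mathbb{R}^P$. The first step is to identify each order polynomial with a lattice-point count on integer dilates of $\mathcal{O}(P)$. A lattice point of $n\cdot\mathcal{O}(P)$ is precisely a weakly order-preserving map $P \to \{0,1,\ldots,n\}$, which after shifting the target by one gives $L_{\mathcal{O}(P)}(n) = \nsomega[n+1]{P}$. For the relative interior, the facet-defining inequalities of $\mathcal{O}(P)$ are $f(x) \leq f(y)$ for each covering relation $x < y$ of $P$, together with $f(x) \geq 0$ for minimal elements and $f(x) \leq 1$ for maximal elements; a lattice point in the relative interior of $n\cdot\mathcal{O}(P)$ therefore corresponds to a strictly order-preserving map $P \to \{1,\ldots,n-1\}$, yielding $L_{\mathcal{O}(P)^\circ}(n) = \somega[n-1]{P}$.

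The second step is to invoke Ehrhart reciprocity: for any $d$-dimensional lattice polytope $Q$ the Ehrhart polynomial satisfies the identity $L_Q(-n) = (-1)^d L_{Q^\circ}(n)$. Applied to $Q = \mathcal{O}(P)$, which has dimension $|P|$, this yields
\[
\nsomega[-n+1]{P} = (-1)^{|P|}\somega[n-1]{P}.
\]
Substituting $x = -n+1$, so that $-x = n-1$, rewrites the identity as $\nsomega[x]{P} = (-1)^{|P|}\somega[-x]{P}$, which is Stanley reciprocity. Since both sides are polynomials in $x$ of degree $|P|$, agreement on infinitely many positive integer values yields agreement as polynomials.

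The main obstacle is the interior identification in Step 1: one must verify that a lattice point $f$ satisfying $f(x) < f(y)$ on every covering relation is automatically strict on every comparable pair, which is immediate by transitivity, and that the boundary inequalities at minimal and maximal elements of $P$ produce exactly the range $\{1,\ldots,n-1\}$ rather than an off-by-one shift. The Ehrhart reciprocity theorem itself is invoked as a black box.

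As a purely combinatorial alternative that avoids polytope theory, one can instead appeal to $P$-partition theory: both $\nsomega[n]{P}$ and $\somega[n]{P}$ can be written as sums over linear extensions $w$ of $P$ of binomial coefficients indexed by the descent statistic of $w$, and the elementary identity $\binom{-n}{k} = (-1)^k \binom{n+k-1}{k}$ then converts the strict sum into the weak sum with the signed substitution $x \mapsto -x$. Either route suffices, but the polytope proof aligns more naturally with the order-polytope perspective developed throughout this paper.
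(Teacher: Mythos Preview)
Your proof is correct. Note, however, that the paper does not supply its own proof of this statement: Theorem~\ref{srt} is quoted as a known result from the literature (Stanley's \cite{crt, enumerative}), so there is no in-paper argument against which to compare yours.

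That said, your derivation via Ehrhart reciprocity on the order polytope is one of the standard modern proofs and fits naturally with the paper's heavy use of $Poly(P)$ and its canonical triangulation. The identifications $L_{\mathcal{O}(P)}(n)=\nsomega[n+1]{P}$ and $L_{\mathcal{O}(P)^\circ}(n)=\somega[n-1]{P}$ are correct, and the substitution $x=-n+1$ is handled cleanly. The alternative $P$-partition route you sketch is closer to Stanley's original argument in \cite{crt}; either is acceptable, but the polytope version is indeed more consonant with the surrounding material here.
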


\begin{remark}
In order to establish a relationship between $\somega[x]{P}$ and $\nsomega[x]{P}$ using Stanley reciprocity, we must first compute the coefficient $(-1)^{|P|}$, which is dependent on the specific poset $P$. Therefore, simply knowing the number of strict maps $hom_{\hbox{strict}}(P,n)$ is insufficient for determining the number of weak maps $hom_{\hbox{weak}}(P,n)$; additional information about the poset $P$ that induced the polynomials is required.
\end{remark}

Below are some important properties of binomial coefficients that will be utilized throughout the paper:
\begin{itemize}
\item The binomial coefficient ${n\choose k}$ represents the number of ways to choose $k$ items from a set of $n$ distinct objects.
\item If we expand the simplex $\Delta[k]$ $n$ times in each direction to form $n\Delta[k]$, the number of lattice points in the simplex is ${n\choose k}$.
\item For a finite chain $\chain[k]$, we have $\somega[n]{\chain[k]}={n\choose k}$, since the image of the order-preserving map can be obtained by choosing $k$ different values from $n$.
\item The $n$-th coefficient of the generating function $\frac{x^k}{(1-x)^{k+1}}$ is ${n\choose k}$. This result can be derived from~\cite[Equation~(1.3)]{lcomb} or~\cite[Equation~(1.5.5)]{gf}.
\end{itemize}

Given a poset $P$, its order polytope $Poly(P)$ is a fundamental object in order theory and combinatorial geometry~\cite{two}. Geometrically, we can associate each element in $P$ with an axis in $[0,1]^{|P|}$ and define $Poly(P)$ as the subspace of $[0,1]^{|P|}$ determined by the poset inequalities. Specifically, $Poly(P)$ is the set of functions $f:P\rightarrow [0,1]$ that satisfy $f(x)\leq f(y)$ whenever $x\leq_P y$. The canonical triangulation of $Poly(P)$ induced by the poset structure decomposes $Poly(P)$ into $d_{|P|}$ simplices of dimension $|P|$ that are glued together along $d_{|P|-1}$-simplices of dimension $|P|-1$. However, we need to remove the extra copies of $|P|-1$-simplices that were counted twice. For example, when $|P|=2$, we have $Poly(P)=\Delta[2]\sqcup\Delta[2]\setminus \Delta[1]$. More generally, we can write:
\[Poly(P)=\sqcup_{d_{P,|P|}} \Delta[|P|]\setminus( \sqcup_{d_{P,|P|-1}} \Delta[|P|-1])\sqcup ( \sqcup_{d_{P,|P|-2}} \Delta[|P|-2])\setminus \cdots (\sqcup_{d_{P, 1}} \Delta[1]).\]
We use this decomposition to define the coefficients  $d_{P,i}$.

The strict order polynomials are integer-valued polynomials, and we can obtain their Gregory-Newton decomposition~\cite{IVP,AMS} as $\Omega^\circ(P,x)=\sum_{i=0}^{|P|} a_i {x\choose i}$. The non-negativity and integrality of the coefficients $a_i$ have been proven in~\cite{crts} by characterizing them as the number of surjective maps from $P$ into the chain $\chain[i]$. In geometric terms, this quantity corresponds to the number $d_{P,i}$ in the canonical triangulation of the order polytope $Poly(P)$~\cite{two}. This leads to an insightful interpretation of the binomial basis vectors, which can be compared to the well-known $h^*$ vectors\cite{monotonicity} and $f^*$ vectors~\cite{fvect}:

\begin{lemma}[Inclusion Exclusion]\label{IEP} Let $P$ be a finite poset, $n=|P|$. If the canonical triangulation of the order polytope of $P$ is given by
 \[Poly(P)=\sqcup_{d_{P,|P|}} \Delta[|P|]\setminus( \sqcup_{d_{P,|P|-1}} \Delta[|P|-1])\sqcup ( \sqcup_{d_{P,|P|-2}} \Delta[|P|-2])\setminus \cdots (\sqcup_{d_{P, 1}} \Delta[1]),\]
then, the strict order polynomial and the weak order polynomial of $P$ satisfy $\somega[x]{P}=\sum_{i=j_0}^n d_{P,i}{x\choose i}$ and $\nsomega[x]{P}=\sum_{i=1}^n (-1)^{|P|-i}d_{P,i}\multiset{x}{i}$.  \end{lemma}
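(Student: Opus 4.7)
The plan is to prove the two identities in sequence, treating the stated inclusion-exclusion decomposition as the definition of the numbers $d_{P,i}$ and leveraging results already cited in the text (\cite{crts, two} and Theorem~\ref{srt}).

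First I would handle $\somega[x]{P}$ by a factorization argument. Every strict order-preserving map $f\colon P\to\chain[x]$ admits a unique epi-mono factorization $P\twoheadrightarrow\chain[i]\hookrightarrow\chain[x]$, where $i=|\mathrm{im}(f)|$: the first arrow sends each $p\in P$ to the rank of $f(p)$ in $\mathrm{im}(f)$ and is a surjective strict order-preserving map, while the second is a strict embedding of chains. The number of strict embeddings $\chain[i]\hookrightarrow\chain[x]$ is $\somega[x]{\chain[i]}=\binom{x}{i}$ by the chain calculation recorded in the bullet list just above the lemma. The number of surjective strict order-preserving maps $P\twoheadrightarrow\chain[i]$ is identified in \cite{crts} with the $i$-th Gregory-Newton coefficient of $\somega[x]{P}$, and by \cite{two} this is exactly the count $d_{P,i}$ read off the canonical triangulation. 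Summing over $i$, with $j_0$ taken to be the least index for which $d_{P,i}\neq 0$ (equivalently one more than the length of a longest chain in $P$), delivers the first formula.

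For the weak order polynomial I would simply invoke Stanley reciprocity (Theorem~\ref{srt}):
\[
\nsomega[x]{P}=(-1)^{|P|}\somega[-x]{P}=(-1)^{|P|}\sum_{i}d_{P,i}\binom{-x}{i}.
\]
The standard upper-negation identity $\binom{-x}{i}=(-1)^i\binom{x+i-1}{i}=(-1)^i\multiset{x}{i}$ then collapses the two signs into $(-1)^{|P|+i}=(-1)^{|P|-i}$, yielding the stated expression.

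The main obstacle, and really the content of the lemma, is the identification of the geometric quantity $d_{P,i}$ (the number of $i$-cells produced by the inclusion-exclusion decomposition of $Poly(P)$) with the combinatorial quantity (surjective strict order-preserving maps $P\twoheadrightarrow\chain[i]$). Both halves of this identification are cited, so the work is to present them coherently and to explain why the alternating signs in the polytope decomposition are consistent with the all-positive Gregory-Newton expansion of $\somega[x]{P}$: the surjections to short chains correspond to collapses of coordinates in $Poly(P)$, which live on lower-dimensional strata that inclusion-exclusion over-counts and then corrects for. To make this concrete I would include a small worked example such as the antichain on two points, where $Poly(P)=[0,1]^2=\Delta[2]\sqcup\Delta[2]\setminus\Delta[1]$ with $d_{P,2}=2$ and $d_{P,1}=1$, and verify both formulas directly before formalizing the general argument.
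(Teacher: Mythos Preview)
Your proposal is correct and follows essentially the same route as the paper: the paper does not give a formal proof environment for this lemma, but the paragraph preceding it identifies the Gregory--Newton coefficients of $\somega[x]{P}$ with the surjection counts via \cite{crts} and with the geometric $d_{P,i}$ via \cite{two}, and the sentence immediately after the lemma derives the weak formula from Stanley reciprocity exactly as you do. Your epi-mono factorization and explicit upper-negation computation simply unpack what the paper leaves to those citations, and the worked example you suggest would be a welcome addition but is not needed for correctness.
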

   
The multiset $\multiset{x}{n}={x+n-1\choose n}$ is utilized in deriving the representation of $\nsomega[x]{P}$ from that of $\somega[x]{P}$, through the application of Stanley reciprocity. One may question the relevance of the multiset, which can be justified as follows: consider a set of positive points denoted as $\{a,b,\cdots, l|\}$, and a set of negative points denoted as $\{|u,v,\cdots, z\}$. The reason for the appearance of the multiset in the enumeration of poset maps is that weak order preserving maps $\chain[k]\rightarrow \chain[n]$ can be seen as subsets of size $k$ from a set with $n$ negative elements $\{|1,2,\cdots, n\}$. For further details, please refer to~\cite{hybrid, nbinomial}, and~\cite{negative} which provide a new interpretation of Theorem~\ref{srt}. It is worth noting that negative sets will not be used in the remainder of this paper.

In collaboration with Ayoub Youssefi Abqari (private communication on 10/31/2021), we have discovered that the results presented in Corollary 3, Corollary 4, Theorem 3, and Theorem 4 of \cite{sums}, which deal with iterated sums, can all be derived through poset arguments. For instance, consider Corollary 4 of \cite{sums}:
\begin{corollary*}[ 4  of \cite{sums}]
     \[{n+k-q\choose k}=\sum_{i_k=q}^n\sum_{i_{k-1}=q}^{i_{k}}\cdots\sum_{i_2=q}^{i_3}\sum_{i_1=q}^{i_2} 1.\]
  \end{corollary*}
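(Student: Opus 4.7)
The plan is to recognize the right-hand side as enumerating weak order-preserving maps from a chain into another chain, and then invoke the chain case of Lemma~\ref{IEP}. First I would observe that the iterated sum $\sum_{i_k=q}^n\sum_{i_{k-1}=q}^{i_k}\cdots\sum_{i_1=q}^{i_2}1$ counts precisely the tuples $(i_1,\ldots,i_k)$ satisfying $q\leq i_1\leq i_2\leq\cdots\leq i_k\leq n$. Sending $j\mapsto i_j$ identifies each such tuple with a non-strict order-preserving map $\chain[k]\to\{q,q+1,\ldots,n\}$, and since $\{q,q+1,\ldots,n\}$ is a chain with $n-q+1$ elements it is isomorphic to $\chain[n-q+1]$. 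Hence the right-hand side equals $\nsomega[n-q+1]{\chain[k]}$.

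Next I would evaluate $\nsomega[x]{\chain[k]}$. The order polytope of $\chain[k]$ is itself a single simplex $\Delta[k]$, so in the notation of Lemma~\ref{IEP} we have $d_{\chain[k],k}=1$ and $d_{\chain[k],i}=0$ for $i<k$. Lemma~\ref{IEP} then gives
\[\somega[x]{\chain[k]}=\binom{x}{k},\qquad \nsomega[x]{\chain[k]}=\multiset{x}{k}=\binom{x+k-1}{k},\]
the second identity following either from the multiset formula of Lemma~\ref{IEP} or equivalently from Stanley reciprocity (Theorem~\ref{srt}) applied to $\somega[x]{\chain[k]}=\binom{x}{k}$. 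Substituting $x=n-q+1$ yields $\binom{n+k-q}{k}$, which matches the claimed left-hand side.

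There is essentially no obstacle in this particular corollary once the identification of the iterated sum with a poset-hom set is made; the chain is the simplest nontrivial poset. The real point of the example is methodological: the other results of \cite{sums} referenced above (their Corollary~3, Theorem~3, Theorem~4) should be recovered in the same way by choosing the appropriate poset $P$ and reading the corresponding iterated sum as an evaluation of $\nsomega[x]{P}$ or $\somega[x]{P}$ at a suitable integer, with the closed form coming from the decomposition of $Poly(P)$ furnished by Lemma~\ref{IEP}.
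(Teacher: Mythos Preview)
Your proof is correct and follows essentially the same approach as the paper: both recognize the iterated sum as counting non-strict order-preserving maps $\chain[k]\to\chain[n-q+1]$ and evaluate this as $\multiset{n-q+1}{k}=\binom{n+k-q}{k}$. The only cosmetic difference is that you route the evaluation of $\nsomega[x]{\chain[k]}$ through Lemma~\ref{IEP}, whereas the paper simply quotes the multiset count directly.
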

\begin{proof}
We may represent the potential values of a map from $\chain[k]$ to $\chain[n]$ as $i_1,\cdots,i_k$. The right-hand side of the equation is counting the number of order-preserving maps from $\chain[k]$ to $\chain[n]$ where every value is greater than or equal to $q$. However, we can instead consider all non-strict maps from $\chain[k]$ to $\chain[n-(q-1)]$, which amounts to $({n-q+1\choose k})={n+k-q\choose k}$.
\end{proof}

After obtaining the vectors of $\somega[x]{P}$, we utilize Lemma~\ref{IEP} to extract geometric information about the order polytope of $P$. As an illustration, let us consider the poset $P=\{x<y>z<w\}$. The calculation of $\somega[x]{P}$ yields ${x\choose 2}+5{x\choose 3}+5{x\choose 4}$. By applying Lemma~\ref{IEP}, we conclude that $Poly(P)$ can be constructed by assembling 5 copies of the simplex $\Delta[4]$, where each simplex is attached to two other $\Delta[4]$ along five $\Delta[3]$ simplices in such a way that all $\Delta[4]$ simplex share a $\Delta[2]$ triangle.

Operads~\cite{ope, atf} provide a powerful tool to handle customized operations. Essentially, an operad of sets comprises a collection of $n-$ary operations $O(n)$, governed by composition rules and acted upon by the symmetric group. For instance, given a set $X$, there exists an operad $End_X$, where $End_X(n)$ denotes the set of endomorphisms $X^n\rightarrow X$. In this context, an algebra over an operad $O$ refers to an operadic morphism $\varphi_A:O\rightarrow End_A$.

Consider a finite poset $P=\{x_1,\cdots,x_{|P|},\leq_{P}\}$. To define $P(P_1, \cdots, P_{|P|})$, the Lexicographic sum~\cite{osets, order} is employed. This operation involves taking the set $\sqcup P_i$ and imposing the order $a\leq_{P(P_1, \cdots, P_{|P|})} b$ if either $a\leq_{P_i} b$ for some $i$, or if $a\in P_r$, $b\in P_t$, and $x_r\leq_P x_t$. For instance, $\{x<y\}(\{a,b\},\{c,d\})=\{a<c,a<d,b<c, b<d\}$, as depicted in Figure~\ref{fig:my_label}. We denote the operad of finite posets by $\mathcal{FP}$.

\begin{figure}[htb]
    \centering
\begin{tikzcd}
\{c \} & \{d\} \\
\{a\}\uar \urar&\{ b\}\ular[crossing over]\uar 
  \end{tikzcd} %
 \caption{We present the Lexicographic sum $\{x<y\}(\{a,b\},\{c,d\})$ of two posets $\{a,b\}$ and $\{c,d\}$, with the original poset $\{x<y\}$.  An arrow from $r$ to $s$ implies that $r<s$. The resulting poset is constructed using the lexicographic order, as defined in the main text.} 
    \label{fig:my_label}
\end{figure}

\begin{theorem}\label{thm:main}
The operad of finite posets acts on the Stanley order polynomials.
\end{theorem}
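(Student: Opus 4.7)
The plan is to define the action on the binomial basis $\{{x\choose k}\}_{k\in\mathbb{N}}$ of the space of integer-valued polynomials, and then extend by multilinearity. Concretely, for a finite poset $P=\{x_1,\ldots,x_n,<_P\}$ and non-negative integers $k_1,\ldots,k_n$, I would set
\[
\varphi_P\Bigl({x\choose k_1},\ldots,{x\choose k_n}\Bigr)\;:=\;\somega[x]{P(\chain[k_1],\ldots,\chain[k_n])},
\]
and extend multilinearly in each slot. It then remains to verify a consistency identity (so that $\varphi_P$ is intrinsic to the polynomials and is indeed the desired action), operadic composition, and $S_n$-equivariance.

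The heart of the argument is the consistency identity
\[
\varphi_P\bigl(\somega[x]{P_1},\ldots,\somega[x]{P_n}\bigr)=\somega[x]{P(P_1,\ldots,P_n)}
\]
for arbitrary posets $P_i$. Expanding $\somega[x]{P_i}=\sum_{j}d_{P_i,j}{x\choose j}$ via Lemma~\ref{IEP} and using multilinearity of $\varphi_P$, this reduces to the combinatorial identity
\[
\somega[x]{P(P_1,\ldots,P_n)}=\sum_{j_1,\ldots,j_n}\Bigl(\prod_{i=1}^n d_{P_i,j_i}\Bigr)\somega[x]{P(\chain[j_1],\ldots,\chain[j_n])}.
\]
I would prove this by partitioning the strict maps $f:P(P_1,\ldots,P_n)\to\chain[x]$ according to the image sizes $j_i=|f(P_i)|$. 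Each restriction $f|_{P_i}$ factors canonically as a strict surjection $P_i\twoheadrightarrow\chain[j_i]$ followed by an order-preserving inclusion of this image into $\chain[x]$; the strict surjections are counted by $d_{P_i,j_i}$ (via the characterization recalled before Lemma~\ref{IEP} from \cite{crts}), and the tuple of inclusions is precisely a strict map $P(\chain[j_1],\ldots,\chain[j_n])\to\chain[x]$, since the constraint $\max f(P_r)<\min f(P_t)$ whenever $x_r<_P x_t$ (forced by the lexicographic sum) matches the definition of $P(\chain[j_1],\ldots,\chain[j_n])$.

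With the consistency identity established, the operad axioms follow formally. Operadic composition in $\mathcal{FP}$ is iterated lexicographic sum, which is associative as labeled posets: $P(Q_1(\vec R_1),\ldots,Q_n(\vec R_n))=(P(Q_1,\ldots,Q_n))(\vec R_1,\ldots,\vec R_n)$. Applying the consistency identity to both $\varphi_{P(Q_1,\ldots,Q_n)}$ and $\varphi_P\circ(\varphi_{Q_1},\ldots,\varphi_{Q_n})$ evaluated on inputs $\somega[x]{R_{ij}}$ shows they both compute the order polynomial of the common iterated lexicographic sum; multilinearity then transfers the agreement to arbitrary polynomial inputs. For $S_n$-equivariance, relabeling the elements of $P$ merely permutes the slots of the lexicographic sum, yielding an isomorphic poset, and since the order polynomial is an isomorphism invariant we obtain $\varphi_{\sigma\cdot P}(g_1,\ldots,g_n)=\varphi_P(g_{\sigma^{-1}(1)},\ldots,g_{\sigma^{-1}(n)})$. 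The main obstacle is the surjection-then-inclusion factorization underlying the consistency identity, since one must be careful that the image sets $f(P_i)\subseteq\chain[x]$ fit together exactly as prescribed by $P$; once this combinatorial decomposition is nailed down, the operadic structure is essentially forced by the associativity of lexicographic sum.
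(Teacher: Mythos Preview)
Your argument is correct, but it takes a genuinely different and more substantive route than the paper. The paper's proof is extremely brief: it takes $A$ to be the set of strict order polynomials \emph{indexed by posets} (so an element of $A$ carries the label of the poset that produced it), and then simply declares $\varphi_A(P)(\somega[x]{P_1},\ldots,\somega[x]{P_{|P|}}):=\somega[x]{P(P_1,\ldots,P_{|P|})}$. With that indexing, the operad axioms are immediate from the associativity of the lexicographic sum, and no further work is done---in particular, the paper does not verify that the action depends only on the polynomial rather than on the underlying poset.

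Your approach, by contrast, builds a genuinely multilinear action on the vector space spanned by $\{{x\choose k}\}$ and proves the consistency identity $\somega[x]{P(P_1,\ldots,P_n)}=\sum_{j_1,\ldots,j_n}\bigl(\prod_i d_{P_i,j_i}\bigr)\somega[x]{P(\chain[j_1],\ldots,\chain[j_n])}$ via the surjection--inclusion factorization. This is strictly more than the paper proves here: it shows the action is intrinsic to the polynomial (so Doppelg\"angers act identically), which is exactly what one needs when transferring the action along linear maps such as $u$ in Lemma~\ref{Lemma:actionseries} or $\tilde n,\tilde n_2$ in Section~\ref{Sec:spz}. The paper's approach buys brevity; yours buys the well-definedness that the later arguments tacitly rely on. Your verification of the factorization (that the tuple of image-inclusions is precisely a strict map $P(\chain[j_1],\ldots,\chain[j_n])\to\chain[x]$, because the lexicographic constraint $\max f(P_r)<\min f(P_t)$ for $x_r<_P x_t$ matches the strictness condition) is correct and is indeed the only point requiring care.
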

\begin{proof}
To prove the theorem, we need to construct an operadic morphism $\varphi_A:\mathcal{FP}\rightarrow End_A$, where $A$ is the set of strict order polynomials indexed by posets.
Consider a poset $P$ with $|P|$ points and associated order polynomials ${\somega[x]{P_1}, \somega[x]{P_2}, \cdots, \somega[x]{P_{|P|}}}$.
The operad $\mathcal{FP}$ assign to these order polynomials the element $\somega[x]{P(P_1, \cdots, P_{|P|})}$, which corresponds to the lexicographic sum of the posets $P_i$ induced by $P$. This assignment satisfies all the necessary conditions of an operadic morphism, establishing the desired result.
\end{proof}

The concept of Lexicographic sum is a well-established topic in order theory, with numerous studies on its properties and applications. For instance, Hiraguchi's relation provides a useful formula for calculating the dimension of the resulting poset:
 \[dim(P(P_1, \cdots, P_{|P|}))=\max\{dim(P),dim(P_1)\cdots,dim(P_{|P|})\},\]
where $dim$ denotes the dimension of a poset. However, instead of focusing on the specific properties of individual posets, our approach aims to identify common properties and structures shared among different poset algebras. 
 
 \subsection{Order series}\label{sectionOS}
There are two families of polynomials indexed by posets, strict order polynomials  $\somega[x]{P}=\sum d_{P,i}{x\choose i},$  and weak order polynomials
$\nsomega[x]{P}=\sum (-1)^{|P|-i}d_{P,i}\multiset{x}{i}$. 

The generating series associated with each of these order polynomials can be defined. The strict order series is given by $\zetaf[P]=\sum_{n=1}\somega[n]{P} x^n$, and the weak order series is defined as $\zetafp[P]=\sum_{n=1}\nsomega[n]{P} x^n$. To make the notation more concise, we will use $\zetaf[n]$ and $\zetafp[n]$ to denote $\zetaf[{\chain[n]}]$ and $\zetafp[{\chain[n]}]$, respectively.

In practice, the use of order series simplifies the computations and clarifies identities. In fact, the study of order series has led to the discovery of new binomial identities and generalizations of the Chu-Vandermonde identity, as shown in \cite{posets}. Therefore, in this section, we focus on exploring the properties of order series.

\begin{remark}\label{Rmk:basis}
Equation~\eqref{eqn:basic} reveals that the generating series of $\somega[x]{P}=\sum_{j=1}^{|P|} d_{P,j}{x\choose j}$ can be expressed as
\begin{eqnarray}
\zetaf[P]&=&\sum_{n=1}^\infty \somega[i]{P}x^i\nonumber\\
&=&\sum_{n=1}^\infty \sum_{j=1}^{|P|}d_{P,j}{i\choose j}x^i\nonumber\\
&=&\sum_{j=1}^{|P|}d_{P,j}\sum_{n=1}^\infty {i\choose j}x^i\nonumber\\
&=&\sum_{j=1}^{|P|}d_{P,j}\zetaf[i],\label{basis:1}
\end{eqnarray}
and similarly, for $\nsomega[x]{P}=\sum (-1)^{|P|-i}d_{P,i}\multiset{x}{i}$, we have $\zetafp[P]=\sum (-1)^{|P|-i}d_{P,i}\zetafp[i]$. We refer to the basis $\{\zetaf[n]\}_{n\in\mathbb{N}}$ (or equivalently $\{\zetafp[n]\}_{n\in\mathbb{N}}$) as the inclusion-exclusion basis for strict (resp. weak) order series. 
It is worth noting that while the inclusion-exclusion vectors $d_{i,P}$ were known to Stanley, to the best of our knowledge, the only reference where they are used to study order series/ Ehrhart series is the paper~\cite{posets}. This could be attributed to the fact that order polytopes possess favorable properties that we cannot expect in arbitrary integer polytopes.
\end{remark}

There is always a non-strict morphism from a poset to $\chain[1]$ that labels every element of the poset with the constant value $1$. Then, the series $\zetafp[P]$ starts at index $n=1$. For the strict order series $\zetaf[P]$, the lowest non-zero value is the length of a maximal chain on the poset.

In category theory, the Yoneda Lemma states that an object $c$ can be fully characterized by the functor $\hom(c,\_)$, which captures how the object interacts with its environment. In combinatorics, we similarly represent a poset $P$ by its order series $\zetaf[P]=\sum \#Hom(P,\chain[n]) x^n$. However, this series loses some information about the poset: it only considers morphisms from $P$ to chains and only keeps track of the number of such maps. Consequently, two posets can have the same order series, and such pairs are called Doppelgängers~\cite{Doppelgangers2, Doppelgangers}. For example, we have $\zetaf[\{w<x,w<y,w<z\}]=\zetaf[\{x<y, w<z\}]$.
In Lemma~\ref{Lemma:actionseries}, we show that the operad $\mathcal{FP}$, which acts on posets, also acts on order series,  bringing all the combinatorial structure to order series despite the information lost.

An operadic morphism $\varrho$ between $A$ and $B$ algebras over $\mathcal{FP}$, is a map making the following diagram commutative:
\[ \begin{tikzcd}
\mathcal{FP} \arrow{rd}{\varphi_B} \arrow[swap]{d}{\varphi_A} & \\%
End_{A}\arrow[swap]{r}{\varrho}&  End_{B}
\end{tikzcd}
\]

\begin{lemma}\label{Lemma:actionseries}
The operad $\mathcal{FP}$ acts on strict (weak) order series. The assignments
\begin{eqnarray}
  u:{x\choose n}&\mapsto&  \zetaf[n]\nonumber\\
    &=&\sum_{r=n}^\infty {r\choose n} x^r\nonumber\\
    &=&\frac{x^n}{(1-x)^{n+1}}\label{eqn:basic},\\ v:\multiset{x}{n}&\mapsto&\zetafp[n]\nonumber\\
    &=& \sum_{r=1}^\infty \multiset{r}{n}x^r\nonumber\\
    &=&\frac{x}{(1-x)^{n+1}},
    \label{eqn:zetaplus}
\end{eqnarray}
extend to an operadic isomorphism between algebras over the operad $\mathcal{FP}$.
\end{lemma}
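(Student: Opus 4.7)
The plan is to derive the lemma as a transport of structure from Theorem~\ref{thm:main} via the natural correspondence between order polynomials and order series. The key input is Remark~\ref{Rmk:basis}, which shows that $\somega[x]{P}=\sum_j d_{P,j}{x\choose j}$ and $\zetaf[P]=\sum_j d_{P,j}\zetaf[j]$ share the same inclusion-exclusion vector $(d_{P,j})$. Thus the map $u$, defined on the basis $\{{x\choose n}\}$ by $u({x\choose n})=\zetaf[n]$ and extended linearly, automatically sends the strict order polynomial $\somega[x]{P}$ to the strict order series $\zetaf[P]$. The same mechanism, via $\nsomega[x]{P}=\sum(-1)^{|P|-j}d_{P,j}\multiset{x}{j}$, governs $v$.

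First I would verify the closed forms $\zetaf[n]=x^n/(1-x)^{n+1}$ and $\zetafp[n]=x/(1-x)^{n+1}$ appearing in the statement. Substituting the known chain values $\somega[r]{\chain[n]}={r\choose n}$ and $\nsomega[r]{\chain[n]}=\multiset{r}{n}={r+n-1\choose n}$ into the defining series and applying the negative binomial expansion $(1-x)^{-(n+1)}=\sum_{k\geq 0}{k+n\choose n}x^k$ gives both identities after a reindexing.

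Next I would check that $u$ (resp.\ $v$) is a vector-space isomorphism. The family $\{{x\choose n}\}_{n\in\mathbb{N}}$ is a basis for integer-valued polynomials, while $\{\zetaf[n]\}_{n\in\mathbb{N}}$ (resp.\ $\{\zetafp[n]\}_{n\in\mathbb{N}}$) is linearly independent as a set of formal power series, since each $\zetaf[n]$ has distinct leading power $x^n$ (resp.\ the $\zetafp[n]$ are visibly linearly independent from their denominators). Therefore $u$ and $v$ extend to vector-space isomorphisms, restricting to bijections between the subsets of strict (resp.\ weak) order polynomials and order series indexed by posets.

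Finally, by Theorem~\ref{thm:main}, $\mathcal{FP}$ acts on strict order polynomials by $\varphi_A(P)(\somega[x]{P_1},\ldots,\somega[x]{P_k})=\somega[x]{P(P_1,\ldots,P_k)}$; I would transport this action through $u$ by declaring $\varphi_B(P)(\zetaf[P_1],\ldots,\zetaf[P_k])=\zetaf[P(P_1,\ldots,P_k)]$. Compatibility with operadic composition and symmetric-group action is inherited from $\varphi_A$, and $u$ intertwines the two actions by construction, which is the required commutative diagram. The parallel argument with $v$, the multiset basis, and the $\nsomega$ polynomials handles the weak case. The main obstacle — really a bookkeeping point — is checking that $\varphi_B$ is well-defined at the level of series rather than posets; this is subsumed by the corresponding statement in Theorem~\ref{thm:main}, where the action is defined on polynomials indexed by posets, so the induced action on series indexed by posets is automatic via the bijection.
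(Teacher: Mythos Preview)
Your proposal is correct and follows essentially the same approach as the paper: both transport the $\mathcal{FP}$-action from order polynomials (Theorem~\ref{thm:main}) to order series via the linear map $u$ (resp.\ $v$), using Remark~\ref{Rmk:basis} to identify the inclusion--exclusion coefficients and the linear independence of $\{{x\choose i}\}$ and $\{\zetaf[i]\}$ to guarantee that $u$ is a well-defined isomorphism onto its image. The only cosmetic difference is that the paper writes the transported action explicitly as $\varphi_s(P,\zetaf[P_1],\ldots)=\sum_{i\geq 1}\varphi_p(P,\somega[y]{P_1},\ldots)|_{y=i}\,x^i$ before identifying it with $\zetaf[P(P_1,\ldots,P_{|P|})]$, whereas you declare that value directly; the content is the same.
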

\begin{proof}
We first extend the map $u$ linearly to obtain
\begin{equation*}
u\Big(\sum_{i=1}^{|P|} d_{P,i}{x\choose i}\Big)=\sum_{i=1}^{|P|} d_{P,i}\zetaf[i].
\end{equation*}
Given a poset $P$ and $P_1,\ldots,P_{|P|}$ finite posets, we define the action of $\mathcal{FP}$ on order series by
\begin{align*}
\varphi_s(P,\zetaf[P_1],\ldots,\zetaf[P_{|P|}])&=\sum_{i=1}^\infty \varphi_p(P,\somega[y]{P_1},\ldots,\somega[y]{P_{|P|}}|_{y=i})x^i.
\end{align*}
If we replace $P$ by $P(P_1,\ldots,P{|P|})$ in Equation~\eqref{basis:1}, we obtain the explicit description:
\begin{equation*}
\varphi_s(P,\zetaf[P_1],\ldots,\zetaf[P_{|P|}])=\sum_{j=1}^{|P(P_1,\ldots,P_{|P|})|}d_{P(P_1,\ldots,P_{|P|}),j}\zetaf[i].
\end{equation*}
Because the set $\{{x\choose i}\}_{i\in\mathbb{N}}$ is linearly independent over $\mathbb{Q}$, the map $u$ is a well-defined linear transformation. Moreover, because the set $\{\frac{x^i}{(1-x)^{i+1}}\}_{i\in\mathbb{N}}$ is linearly independent over $\mathbb{Q}$, if we have $\sum a_i u({x\choose i})=0$, then $a_i=0$, i.e., $u$ is injective and an isomorphism over its image. It follows that the action of the operad $\mathcal{FP}$ on strict (weak) order series is well-defined and isomorphic to the image of another algebra over $\mathcal{FP}$. The same argument applies to weak order series.
\end{proof}

We can leverage analytic methods in our study of posets via power series. For instance, given a poset $P$, we define the function $i(x)=\frac{1}{x}$ and the map $\iota(\zetafp[P])=(-1)^{|P|+1}(\zetafp[P]\circ{i})$. Note that we need to know that the function $\zetafp[P]$ is generated by the poset $P$ in order to evaluate $\iota$. 
\begin{lemma}\label{lemma:iso}
The map $\iota$ is an involution between the set of weak order series and the set of strict order series.
\end{lemma}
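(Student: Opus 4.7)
The plan is to reduce the identity to a computation on the inclusion-exclusion bases supplied by Remark~\ref{Rmk:basis}, and to carry out the substitution $x \mapsto 1/x$ on the rational-function representations $\zetafp[n] = \frac{x}{(1-x)^{n+1}}$ and $\zetaf[n] = \frac{x^n}{(1-x)^{n+1}}$ from Lemma~\ref{Lemma:actionseries}. The same formula $\iota(\zetaf[P]) = (-1)^{|P|+1}(\zetaf[P] \circ i)$ defines $\iota$ on strict order series, so the statement amounts to showing the two assignments invert each other.

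First I would compute the action of the substitution on the two bases. A short manipulation of the closed forms above yields $\zetafp[n](1/x) = (-1)^{n+1}\zetaf[n]$ and, symmetrically, $\zetaf[n](1/x) = (-1)^{n+1}\zetafp[n]$. These two identities are Stanley reciprocity (Theorem~\ref{srt}) restated at the level of generating series: the substitution swaps the two bases up to a sign determined by $n = |\chain[n]|$.

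Next I would expand a general series in the inclusion-exclusion basis as $\zetafp[P] = \sum_i (-1)^{|P|-i} d_{P,i} \zetafp[i]$ and $\zetaf[P] = \sum_i d_{P,i} \zetaf[i]$, and apply the basis computation termwise. In the weak case the signs collapse as $(-1)^{|P|-i}(-1)^{i+1} = (-1)^{|P|+1}$, producing $\zetafp[P](1/x) = (-1)^{|P|+1}\zetaf[P]$ and hence $\iota(\zetafp[P]) = \zetaf[P]$. In the strict case the analogous calculation rewrites $(-1)^{i+1} = (-1)^{|P|+1}(-1)^{|P|-i}$ to obtain $\zetaf[P](1/x) = (-1)^{|P|+1}\zetafp[P]$, hence $\iota(\zetaf[P]) = \zetafp[P]$. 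Composing these two identities gives $\iota \circ \iota = \mathrm{id}$ on both sides, which is the involution claim.

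The only subtlety I anticipate is the definition itself: the normalizing factor $(-1)^{|P|+1}$ depends on $|P|$, not a priori on the series alone. This is not a genuine obstacle, since $|P|$ can be read off from the series (for instance as one less than the order of the pole at $x = 1$), so $\iota$ is intrinsically well-defined on both the weak and strict sides.
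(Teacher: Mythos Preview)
Your proposal is correct and follows essentially the same route as the paper: expand in the inclusion-exclusion basis, compute the substitution $x\mapsto 1/x$ on each basis element $\zetaf[n]$ and $\zetafp[n]$, and collapse the signs. If anything, you are slightly more careful than the paper, which computes only the direction $\iota(\zetaf[P])=\zetafp[P]$ and then asserts the involution property, whereas you verify both directions and also address why the normalization $(-1)^{|P|+1}$ is intrinsic to the series.
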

\begin{proof}
Let $\zetaf[P]$ be a strict order series. Then, we have
\begin{eqnarray*}
\iota(\zetaf[P])&=&(-1)^{|P|+1}(\zetaf[P]\circ{i})\\
&=&(-1)^{|P|+1}(\sum d_{P,i}\frac{x^i}{(1-x)^{i+1}})\circ{i}\\
&=&(-1)^{|P|+1}(\sum (-1)^{i+1}d_{P,i}\frac{x}{(1-x)^{i+1}})\\
&=&\sum (-1)^{|P|-i}d_{P,i}\frac{x}{(1-x)^{i+1}}\\
&=&\zetafp[P],
\end{eqnarray*}
where the second equality follows from substituting $x\rightarrow \frac{1}{x}$ and using the fact that $d_{P,i}=0$ for $i>|P|$. Thus, $\iota$ maps strict order series to weak order series.

Since $\iota$ is its own inverse, it is an involution between the set of weak order series and the set of strict order series.
\end{proof}

\begin{lemma}\label{lemma:i}
The morphism $\iota$ is an operadic involution between weak order series  and strict order series. 
\end{lemma}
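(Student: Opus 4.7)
The plan is to unpack what it means for $\iota$ to be an operadic morphism, and then observe that all the work has essentially already been done in Lemmas~\ref{Lemma:actionseries} and~\ref{lemma:iso}. Concretely, per the commutative diagram preceding Lemma~\ref{Lemma:actionseries}, what remains to verify is that for every poset $P$ with $|P|$ elements and every collection of posets $Q_1,\ldots,Q_{|P|}$, the equality
\[
\iota\bigl(\varphi_s^{+}(P,\zetafp[Q_1],\ldots,\zetafp[Q_{|P|}])\bigr)
=\varphi_s\bigl(P,\iota(\zetafp[Q_1]),\ldots,\iota(\zetafp[Q_{|P|}])\bigr)
\]
holds, where $\varphi_s^{+}$ denotes the action on weak order series and $\varphi_s$ the action on strict order series from Lemma~\ref{Lemma:actionseries}. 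Combined with Lemma~\ref{lemma:iso}, which already gives that $\iota$ is a set-theoretic involution between the two families, this will establish the claim.

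First, I would invoke Lemma~\ref{Lemma:actionseries} on both sides to identify the operadic composition with the lexicographic sum of posets. The action on weak order series sends $(P;\zetafp[Q_1],\ldots,\zetafp[Q_{|P|}])$ to $\zetafp[P(Q_1,\ldots,Q_{|P|})]$, and symmetrically the action on strict order series sends $(P;\zetaf[Q_1],\ldots,\zetaf[Q_{|P|}])$ to $\zetaf[P(Q_1,\ldots,Q_{|P|})]$. This reduces both sides of the required identity to expressions indexed by the single poset $R:=P(Q_1,\ldots,Q_{|P|})$.

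Next, I would apply Lemma~\ref{lemma:iso} twice. On the left-hand side, $\iota(\zetafp[R])=\zetaf[R]$. On the right-hand side, $\iota(\zetafp[Q_i])=\zetaf[Q_i]$ for each input, and feeding these into $\varphi_s$ produces $\zetaf[R]$ by the identification above. Both sides therefore equal $\zetaf[R]$, giving the desired equality.

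The only subtlety (and hence the main obstacle) is that $\iota$ was defined using the data of the generating poset rather than purely on the formal series. I would address this by emphasizing that the operadic framework naturally tracks this data: an element of the algebra is the image of a poset under $\zetaf[-]$ (resp. $\zetafp[-]$), and Remark~\ref{Rmk:basis} together with the linear independence of the inclusion--exclusion basis established inside the proof of Lemma~\ref{Lemma:actionseries} ensure that $\iota$ extends consistently. Once this is acknowledged, the diagram chase above goes through cleanly, and since $\iota^2=\mathrm{id}$ by Lemma~\ref{lemma:iso}, we conclude that $\iota$ is an operadic involution.
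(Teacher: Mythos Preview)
Your proof is correct and follows essentially the same approach as the paper: both arguments reduce the operadic compatibility of $\iota$ to the identity $\iota(\zetafp[R])=\zetaf[R]$ (resp.\ $\iota(\zetaf[R])=\zetafp[R]$) for the lexicographic sum $R=P(Q_1,\ldots,Q_{|P|})$, invoking Lemma~\ref{Lemma:actionseries} to interpret the operadic action and Lemma~\ref{lemma:iso} to pass between weak and strict. Your added remark about $\iota$ depending on the underlying poset rather than only on the series is a welcome clarification, but the core diagram chase is the same.
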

\begin{proof} 
Let $P_1,\cdots, P_{|P|}$ be finite posets.  Now, to see that $\iota$ commutes with the operadic action, evaluate on $P(P_1,\cdots,P_{|P|})$ to obtain:
\begin{eqnarray*}
\iota(\varphi(P,\zetaf[P_1],\cdots,\zetaf[P_{|P|}]))&=&\iota(\zetaf[P(P_1,\cdots,P_{|P|})])\\
&=&\zetafp[P(P_1,\cdots,P_{|P|})]\\
&=&\varphi(P,\zetafp[P_1],\cdots,\zetafp[P_{|P|}])\\
&=&\varphi(P,\iota(\zetaf[P_1]),\cdots,\iota(\zetaf[P_{|P|}])).
\end{eqnarray*}
The verification of $\iota(\varphi(P,\zetafp[P_1],\cdots,\zetafp[P_{|P|}]))=\varphi(P,\iota(\zetafp[P_1]),\cdots,\iota(\zetafp[P_{|P|}]))$ fo\-llows the same reasoning.
\end{proof}

\begin{remark}\label{rmk:zero}
The series $\zetaf[\varnothing]=\frac{1}{(1-x)}$ is the unit of strict order series under the action of the operad $\mathcal{FP}$. The image of this unit under the involution $\iota$, denoted by $\zetafp[\mayadigit{0}] := (-1)^{1}(\zetafp[\varnothing]\circ\iota)=\frac{x}{1-x}$, it is the unit of weak order series under the action of the operad $\mathcal{FP}$. We introduce the symbol $\mayadigit{0}$ to represent an expansion around infinity of the power series $\zetaf[\varnothing]$.
\end{remark}

\subsubsection{Explicit Description of Low-Level Operations}\label{explicit}
We now describe the action of a poset $P$ on order series for small $P$. We denote the Hadamard product of two power series $f(x)$ and $g(x)$ as $f(x)\sqcup g(x)$. The table in Figure~\ref{fig:1} provides an explicit description of the action for series-parallel posets. For more information, please refer to \cite{posets}.

\begin{figure}[htb]
\begin{center}
\begin{tabular}{ | m{4cm} |  m{9cm} | } 
  \hline
  \hline
  Poset&Evaluation on order series\\
  \hline
  \hline
$\{x,y\}$&  $\{x,y\}(\zetaf[P_1],\zetaf[P_2])=\zetaf[P_1]\sqcup\zetaf[P_2]$\\ 
  \hline
  $\{x<y\}$ & $\{x<y\}(\zetaf[P_1],\zetaf[P_2])$\\
  &= $\zetaf[P_1](1-x)\zetaf[P_2]$ \\ 
  \hline
   $\{x_1<x_2, x_1<x_3\}$ & $\{x_1<x_2, x_1<x_3\}$ $(\zetaf[P_1],\zetaf[P_1],\zetaf[P_1])$\\
   &$=\zetaf[P_1](1-x)(\zetaf[P_2]\sqcup\zetaf[P_3])$\\ 
  \hline
\end{tabular}
\end{center}\caption{Operations on power series parametrized by posets. \label{fig:1}}
\end{figure}

Note that
\begin{eqnarray*}
\{x<y\}(\zetaf[k],\zetaf[j])&=&\zetaf[k](1-x)\zetaf[j]\\
&=&\zetaf[{k+j}],
\end{eqnarray*}
as expected. This is because the ordinal sum of posets satisfies $\{x<y\}(\chain[k],\chain[j])=\chain[k+j]$.

 Consider the following identity~\cite[(6.44)]{lcomb},\cite{posets}:
\begin{equation}\label{Eqn:1}
{x\choose n}{x\choose s}=\sum_{j=0}^{s} {n+j\choose s}{s\choose j}{x\choose n+j},
\end{equation}
where $s\leq n$.

According to~\cite{posets}, the Hadamard product of the order series of chains can be computed using the following formula:
\begin{equation}
\zetaf[n]\sqcup\zetaf[s] = \sum_{j=0}^{s}{n+j\choose s}{s\choose j}\zetaf[n+j],
\label{Eqn:cup}
\end{equation}
where $s\leq n$. This result can be proven using the fact that $Poly(P\sqcup Q)$ is the Minkowski sum of $Poly(P)$ and $Poly(Q)$, and then we count the simplices on the cannonical triangulation of $Poly(P\sqcup Q)$.

\begin{example}
\label{example}We define the action of the operad of finite posets on natural numbers by
considering the action of the operad on order series, where we record only the smallest chain after a poset acts on order series. For example, using Equation~\eqref{Eqn:cup}, we define $\{x,y\}(m,n)=\max\{m,n\}$, and using \[\{1<2\}(\zetaf[m],\zetaf[n])=  \zetaf[m+n],\] we define $\{1<2\}(m,n)=m+n$. By extending these definitions to all real numbers, we recover the tropical numbers~\cite{tropical} $(\mathbb{R}, +, \max, -\infty)$.

One could wonder about the actions of higher poset operations. For example, we compute $\{x<y>z<w\}(m,n,r,s)=\max(m+n,n+r,r+s)$. Strictly speaking, if we restrict to the operad generated by $\{x,y\}$ and $\{x<y\}$, then this is not a $4$-ary operation, but the result of evaluating a $6$-ary operation on duplicated inputs. This structure could be explained better with a product and permutation category (PROP), but we will not study such an approach in this paper.

In general, given a poset $P$, evaluating on $(n_1,\cdots, n_{|P|})$ returns the length of the maximal chain in the lexicographic sum $P(\chain[n_1],\cdots,\chain[n_{|P|}])$. Then, every poset operation on the tropical natural numbers is obtained by evaluating an operation involving only $\{1<2\}$ and $\{1, 2\}$, perhaps on duplicated inputs.
\end{example}

\begin{lemma}
\label{Lemma:4}
The operad of finite posets acting on order series contains a $4$-ary operation that is associative and non-commutative, and cannot be generated by only using the binary operations $\{x<y\}$ and $\{x,y\}$.
\end{lemma}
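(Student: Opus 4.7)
The 4-ary operation I would exhibit is $N := \{x<y>z<w\}$ itself, whose action on order series is supplied by Theorem~\ref{thm:main} together with Lemma~\ref{Lemma:actionseries}: on the inclusion-exclusion basis it sends $(\zetaf[P_1],\zetaf[P_2],\zetaf[P_3],\zetaf[P_4])$ to $\zetaf[N(P_1,P_2,P_3,P_4)]$, the order series of the lexicographic sum. For non-commutativity, I would argue that $\mathrm{Aut}(N)$ is trivial via a degree count: the four vertices of $N$ have pairwise distinct (down-degree, up-degree) signatures $x:(0,1)$, $y:(2,0)$, $z:(0,2)$, $w:(1,0)$. Hence no non-identity $\sigma \in S_4$ stabilizes $N$, and taking four chains of pairwise distinct lengths (for instance $\chain[1],\chain[2],\chain[3],\chain[4]$) as inputs produces lexicographic sums whose Hasse diagrams, strict order polynomials, and hence order series visibly differ under any nontrivial permutation of slots.

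For the non-generation claim, I would invoke the classical Valdes--Tarjan--Lawler characterization: a finite poset is series-parallel (built from singletons by iterated ordinal sums and disjoint unions) if and only if it avoids $N$ as an induced subposet. The suboperad of $\mathcal{FP}$ generated by $\{x<y\}$ and $\{x,y\}$ is exactly the class of series-parallel posets, and since $N$ evidently contains $N$ as an induced subposet, $N$ is not in that suboperad. The operadic isomorphism of Lemma~\ref{Lemma:actionseries} then transports the conclusion to order series: the operation $\zetaf[N(P_1,\ldots,P_4)]$ is not a composition of the order-series operations attached to $\{x<y\}$ and $\{x,y\}$.

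Associativity is the step I expect to be the main obstacle, because ``associative'' has no single canonical meaning for a $4$-ary operation. I would read the statement in the operadic sense natural to the paper: for any admissible pair of indices $(i,j)$ the partial compositions satisfy $(N\circ_i N)\circ_j N = N\circ_i (N\circ_j N)$, which is precisely the operad-composition axiom in $\mathcal{FP}$. This identity is inherited from the definition of composition in $\mathcal{FP}$ as iterated lexicographic sum, together with the associativity of lex-sum substitution---substituting a lex sum into a vertex of another poset and then substituting further yields the same Hasse diagram as performing the two substitutions simultaneously, because the resulting cover relations are determined locally at each substituted vertex and do not depend on the order of insertion. Once the precise associativity identity is pinned down in the paper's conventions, its verification reduces to comparing cover relations on the nested union of vertex sets, which I expect to be routine.
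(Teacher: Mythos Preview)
Your argument has two genuine gaps, both rooted in the same oversight: the lemma concerns operations on \emph{order series}, and the passage from $\mathcal{FP}$ to operations on order series is not injective.

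For non-commutativity you argue that, since $\mathrm{Aut}(N)$ is trivial, inserting chains of four distinct lengths produces order series that differ under \emph{every} nontrivial permutation of the slots. This is false. The poset $N$ is self-dual via the anti-isomorphism $x\leftrightarrow w$, $y\leftrightarrow z$, and since a poset and its opposite have the same strict order polynomial, one gets
\[
\zetaf[N(\chain[a],\chain[b],\chain[c],\chain[d])]\;=\;\zetaf[N(\chain[d],\chain[c],\chain[b],\chain[a])]
\]
for all $a,b,c,d$; the paper in fact records this identity immediately after the proof. So the reversal permutation acts trivially on the order-series operation even though it does not stabilise $N$ as a labelled poset. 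Your degree-signature observation therefore does not prove what you need. The paper establishes non-commutativity by an explicit computation showing $\{x<y>z<w\}(\zetaf[2],\zetaf[1],\zetaf[1],\zetaf[1])\neq\{x<y>z<w\}(\zetaf[1],\zetaf[2],\zetaf[1],\zetaf[1])$.

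For non-generation, the Valdes--Tarjan--Lawler theorem places $N$ outside the series-parallel suboperad of $\mathcal{FP}$, but the claim to be proved is that $\varphi(N)$ lies outside the image of that suboperad in $End_A$. Lemma~\ref{Lemma:actionseries} is an isomorphism of \emph{algebras} over $\mathcal{FP}$; it says nothing about injectivity of the structure map $\varphi:\mathcal{FP}\to End_A$, and the paper explicitly notes that Doppelg\"angers exist, so $\varphi$ is not injective. Your appeal to Lemma~\ref{Lemma:actionseries} therefore does not exclude the possibility that some $4$-element series-parallel poset $Q$ satisfies $\varphi(Q)=\varphi(N)$ as operations on order series. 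The paper closes this gap by direct computation: Table~\ref{fig:2} evaluates every $4$-ary series-parallel operation at $(\zetaf[1],\zetaf[1],\zetaf[1],\zetaf[1])$ and verifies that none of them returns $\zetaf[2]+5\zetaf[3]+5\zetaf[4]$, and then invokes \cite[Proposition~3.3]{posets} to rule out higher-arity series-parallel operations with repeated inputs.
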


\begin{proof}
We used Mathematica~\cite{Mathematica} to compute the values of Table~\ref{fig:2}, as explained in the notebook~\cite{note}. Table~\ref{fig:2} contains the 4th-ary operations generated by $\{x,y\}$ and $\{x<y\}$, evaluated on $(\zetaf[1],\zetaf[1],\zetaf[1],\zetaf[1])$.

\begin{figure}[htb]
\centering
\begin{tabular}{ | m{4cm} |  m{7cm} | } 
  \hline
  Poset&Vector\\
  \hline  \hline
  $\{x<y<z<w\}$& 
$\zetaf[4]$\\ 
  \hline
$\{x<y<z,w\}$& $
3\zetaf[3]+4\zetaf[4]$\\ 
  \hline
$\{x<y,z<w\},$ $\{x<y,x<z, x<w\},$ $\{y<x,z<x, w<x\}$& 
$\zetaf[2]+6\zetaf[3]+6\zetaf[4]$\\ 
  \hline
$\{x,y,z<w\}$& 
$4\zetaf[2]+15\zetaf[3]+12\zetaf[4]$\\   \hline
$\{x,y,z,w\}$& 
$\zetaf[1]+14\zetaf[2]+36\zetaf[3]+24\zetaf[4]$\\ 
  \hline
  $\{x<y>z,w\},$ $\{x,y>z<w\} $&$2\zetaf[2]+9\zetaf[3]+8\zetaf[4]$\\   \hline
  $\{x<y>z<w\}$&$ 
\zetaf[2]+5\zetaf[3]+5\zetaf[4]$\\ 
  \hline
\end{tabular}
\caption{Vectors of the evaluation of the 4th-ary operation on $(\zetaf[1],\zetaf[1],\zetaf[1],\zetaf[1])$}
\label{fig:2}
\end{figure}

While $\{x<y\}$ and $\{x,y\}$ are binary, commutative, associative operations, Table~\ref{fig:2} shows that the operation $\{x<y>z<w\}$ is a non-commutative $4^{th}$-ary operation that is not generated by $\{x<y\}$ and $\{x,y\}$. We know that this operation is non-commutative because
\begin{eqnarray}
\{x<y>z<w\}(\zetaf[2],\zetaf[1],\zetaf[1],\zetaf[1])&=&\{x<y>z<w\}(\zetaf[1],\zetaf[1],\zetaf[2],\zetaf[1])\nonumber\\
&=&9\zetaf[5]+11\zetaf[4]+3\zetaf[2],\\
\nonumber\\
\{x<y>z<w\}(\zetaf[1],\zetaf[2],\zetaf[1],\zetaf[1])&=&\{x<y>z<w\}(\zetaf[1],\zetaf[1],\zetaf[1],\zetaf[2])\nonumber\\
&=&7\zetaf[5]+8\zetaf[4]+2\zetaf[2].\label{ope:4}
\end{eqnarray}

From Example~\ref{example}, one can wonder if there is an $n$-ary operation $f$ evaluated on duplicated inputs that is generated by $\{x,y\},\{x<y\}$ and sends $(\zetaf[1],\zetaf[1],\zetaf[1],\zetaf[1],\zetaf[1],\cdots)$ to $\zetaf[2]+5\zetaf[3]+5\zetaf[4]$. If such an operation exists, there is a series parallel poset $P_f$ associated with it. A series parallel poset is a poset generated by $\{x,y\},\{x<y\}$.

From \cite[Proposition 3.3]{posets}, parts 2) and 3) it follows that the maximal chain of $P_f$ has length 2 and the poset $P_f$ has 4 points. This rules out the possibility of having an operation with a higher number of inputs returning $\zetaf[2]+5\zetaf[3]+5\zetaf[4]$.
\end{proof}

Note that $\{x<y,x<z\}$ is also non-commutative since the value of $x$ cannot be exchanged with the value of $y$ or $z$. However, $\{x<y,x<z\}$ can be expressed as the composition (on $w$) of the binary operations $\{y,z\}$ and $\{x<w\}$.

Some know properties of $\{x<y>z<w\}$ are:
\begin{eqnarray*}
    \{x<y>z<w\}(\zetaf[a],\zetaf[b],\zetaf[c],\zetaf[d])&=&\{x<y>z<w\}(\zetaf[d],\zetaf[c],\zetaf[b],\zetaf[a])\\
    \\
      \{x<y>z<w\}(\zetaf[k],\zetaf[1],\zetaf[1],\zetaf[1])&=&(\frac{(k+3-2)(k+3+1)}{2}) \zetaf[k+3]\\
      &&+({k+2\choose 2}+\frac{(k+2-2)(k+2+1)}{2}) \zetaf[k+2]
      \\
      &&+{k+1\choose 2}  \zetaf[k+1]\\
      \\
      \{x<y>z<w\}(\zetaf[1],\zetaf[k],\zetaf[1],\zetaf[1])&=&(2(k+3)-3)\zetaf[k+3]+(2(k+2)-3)\zetaf[k+2]\\
      &&+[(k+2-1)]\zetaf[k+2]+(k+1-1)\zetaf[k+1]\\
      \\
      \{x<y>z<w\}(\zetaf[\varnothing],\zetaf[b],\zetaf[c],\zetaf[d])&=&\zetaf[c](1-x)(\zetaf[b]\sqcup \zetaf[d])\\
      \\
      \{x<y>z<w\}(\zetaf[a],\zetaf[\varnothing],\zetaf[c],\zetaf[d])&=&\zetaf[a]\sqcup \zetaf[c+d]\\
      \\
      \{x<y>z<w\}(\zetaf[a],\zetaf[b],\zetaf[\varnothing],\zetaf[d])&=&\zetaf[a+b]\sqcup \zetaf[d]\\
      \\
      \{x<y>z<w\}(\zetaf[a],\zetaf[b],\zetaf[c],\zetaf[\varnothing])&=&\zetaf[b](1-x)(\zetaf[a]\sqcup \zetaf[c])
\end{eqnarray*}

\section{The union of points}\label{sub:union}

In this section we study the case in which the poset is the disjoint union of points. As a result, we obtain conceptual proofs of several results on Eulerian numbers. 

Eulerian polynomials have been extensively studied and have numerous applications \cite{surveyeulerian, euler, eulerianB}. In this work, we adopt the modern definition of Eulerian polynomials, which is given by the following generating function:
\begin{equation}\label{eq:euler}
\sum_{n=0}^\infty A_n(t)\frac{x^n}{n!}=\frac{t-1}{t-e^{(t-1)x}}.
\end{equation}
By multiplying Equation~\eqref{eq:euler} by $x$ and substituting $x=\frac{1}{1-t}$, we obtain
\begin{equation}\label{Eq:Eu}
\sum_{n=0}^\infty \frac{A_n(t)}{(1-t)^{n+1}n!}=\frac{1}{e^{-1}-t}.
\end{equation}
It is well known that the Eulerian polynomials satisfy the identity:
\begin{equation}
x\frac{A_n(x)}{(1-x)^{n+1}}=
\sum_{k=1}^\infty k^n x^k=\sqcup_n \zetaf[{1}].\label{eq:polyid}
\end{equation}

 Here, we realize the right side of the equation as the strict order series of the disjoint union of $n$ points. 
 We consider the $n$-cube as the order polytope of the disjoint union of $n$ points. 

Given a polytope $Po$, we can define $L(Po,t)$ as the count of lattice points in the expansion of $tPo$ for a given value of $t$. By summing $L(Po,t)$ over all positive integers $t$, we arrive at the Ehrhart series of $Po$, which can be expressed as $\sum_{t=1}^\infty L(Po,t) x^t=\frac{h^\ast(x)}{(1-x)^{d+1}}$, where $h^\ast(x)$ is a polynomial of degree less than $d$. 
To begin our exploration, let us consider a poset $P$. By computing the Ehrhart series of the order polytope of $P$, we can then utilize Equation~\eqref{eqn:e} to deduce the strict order series of $P$:

\begin{equation}\label{eqn:e}
\zetafp[P]=x\frac{h^*(x)}{(1-x)^{d+1}}.
\end{equation}
When $P$ is the
disjoint union of $n$ points, then the $n$-cube serves as the order polytope of $P$. 

  \begin{proposition}
 For a poset $P$, the Ehrhart series of the order polytope of $P$ $\frac{h^*(x)}{(1-x)^{\|P\|+1}}$ satisfies $h^*_{\|P\|}=0$.
 \end{proposition}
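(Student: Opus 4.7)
The plan is to identify $h^*_{|P|}$ with a concrete count of interior lattice points of $Poly(P)$ and then observe that this count vanishes.

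First I would recall the classical consequence of Ehrhart--Macdonald reciprocity: for a lattice polytope $Po$ of dimension $d$ with Ehrhart series $h^*(x)/(1-x)^{d+1}$, one has
\[\sum_{t\geq 1} L^\circ(Po,t)\, x^t \;=\; \frac{x^{d+1} h^*(1/x)}{(1-x)^{d+1}},\]
where $L^\circ(Po,t)$ counts the lattice points in the relative interior of $tPo$. Expanding the right hand side and reading off the coefficient of $x$ yields $L^\circ(Po,1)=h^*_d$; that is, the top $h^*$-coefficient equals the number of lattice points in the relative interior of $Po$ itself.

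Next I would specialize to $Po = Poly(P)$. The order polytope is full dimensional in $\mathbb{R}^{|P|}$, since any strictly increasing function $P\to (0,1)$ (built from a linear extension of $P$) provides an interior point, so $d=|P|$. By construction $Poly(P)\subseteq [0,1]^{|P|}$, so every lattice point of $Poly(P)$ has coordinates in $\{0,1\}$. However, a point $f$ in the interior of $Poly(P)$ satisfies $0<f(x)<1$ for every $x\in P$, so none of its coordinates can be integers. Therefore $Poly(P)^\circ\cap\mathbb{Z}^{|P|}=\varnothing$, and combining this with the previous paragraph gives $h^*_{|P|}=0$.

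The only nonelementary ingredient is the identification $h^*_d = L^\circ(Po,1)$, but this is a standard manipulation starting from Ehrhart--Macdonald reciprocity $L^\circ(Po,t) = (-1)^d L(Po,-t)$. Once that is granted, the argument reduces to the elementary geometric fact that the open unit cube $(0,1)^{|P|}$ contains no integer points, so I don't expect any serious obstacle beyond quoting the reciprocity identity correctly.
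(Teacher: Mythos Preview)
Your argument is correct, but it proceeds along a genuinely different line from the paper. The paper's proof is purely algebraic: it uses that the weak order series $\zetafp[P]=x\,h^*(x)/(1-x)^{|P|+1}$ lies in the $\mathbb{Z}$--span of the basis elements $\zetafp[i]=x/(1-x)^{i+1}$ for $1\le i\le |P|$ (this is the content of Lemma~\ref{IEP} and Remark~\ref{Rmk:basis}). Clearing $x/(1-x)^{|P|+1}$ shows that $h^*(x)$ is then a $\mathbb{Z}$--combination of $(1-x)^{|P|-i}$, hence has degree at most $|P|-1$, so $h^*_{|P|}=0$. No Ehrhart reciprocity is invoked.

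Your route instead identifies $h^*_{|P|}$ with the number of interior lattice points of $Poly(P)$ via Ehrhart--Macdonald reciprocity, and then observes that the interior sits inside the open cube $(0,1)^{|P|}$, which contains no lattice points. This is a cleaner geometric picture and in fact proves a more general statement: $h^*_d=0$ for \emph{any} full--dimensional lattice polytope contained in the unit cube, not only order polytopes. The trade--off is that you import a nontrivial external result (reciprocity), whereas the paper stays entirely within the inclusion--exclusion framework it has already set up. Both arguments are short; the paper's fits its internal narrative, while yours gives a stand--alone reason that a reader coming from Ehrhart theory would immediately recognise.
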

\begin{proof}
Note that $\frac{h_{\|P\|}^*x^{\|P\|+1}}{(1-x)^{\|P\|+1}}$ cannot be in the algebra $\{\zetafp[i]=\frac{x}{(1-x)^{i+1}}\}_{1\leq i\leq \|P\|}$ unless $h_{\|P\|}^*=0$.
\end{proof}

By utilizing Equation~\eqref{eq:polyid}, we can see that the $h^*(x)$ polynomial of the $n$-cube is, in fact, the $n-$Eulerian polynomial $A_n(x)={\sum_{m=0}^{n-1}A(n,m)}x^m$.

Further applying $x=-1, t=-1$ and $\psi_n(t):=A_n(-t)$ to Equation~\eqref{eq:polyid}, we can deduce the following entry from \cite[Chapter 5, Entry 4]{R}:
For $|P|<1$ and $n\geq0$,
\[(p+1)^{-n-1}\psi_n(p)=\sum_{k=0}^\infty(k+1)^n(-p)^k.\]

\begin{example}
By combining Lemma~\ref{IEP} and Remark~\ref{Rmk:basis}, we can express the order series of the union of $n$ points as:
\begin{equation}\label{eq:union}
\sqcup_n\zetaf[{1}] = \sum_{i=1}^{n} d_{\sqcup_n\chain[1],i}\zetaf[i],
\end{equation}
where $d_{\sqcup_n\chain[1],i}$ is the number of internal $i$-faces in the canonical triangulation of the $n$-cube that are obtained as the intersection of $n-i+1$ maximal $n$-simplices.
\end{example}

\begin{corollary}\label{Cor:up}
For a fixed integer $n$, there exists a sequence of natural numbers ${d_{\sqcup_n\chain[1],i}}{1\leq i\leq n}$ such that for all $r>0$,
\begin{itemize}
\item $r^n=\sum_{i=1}^{n} d_{\sqcup_n\chain[1],i}{r\choose i}$,
\end{itemize}
where ${q\choose p}=0$ if $q<p$.
\end{corollary}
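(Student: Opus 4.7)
The plan is a direct application of Lemma~\ref{IEP} to the antichain $P=\sqcup_n\chain[1]$.

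First, compute $\somega[r]{P}$ explicitly. Because no two elements of $P$ are comparable, the defining implication $x<_P y \Rightarrow f(x)<f(y)$ for a strict order-preserving map $f\colon P\to\chain[r]$ is vacuous, so every function qualifies. Since each of the $n$ isolated points may be sent independently to any of the $r$ values of $\chain[r]$, there are exactly $r^n$ such maps, giving $\somega[r]{P}=r^n$ as an identity of polynomials in $r$.

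Second, invoke Lemma~\ref{IEP}, which writes $\somega[x]{P}=\sum_{i=1}^{n} d_{P,i}\binom{x}{i}$ with coefficients $d_{P,i}=d_{\sqcup_n\chain[1],i}$ that are natural numbers, since by that lemma they count simplices in the canonical triangulation of $Poly(P)=[0,1]^n$. Specialising this polynomial identity to $x=r$ for each positive integer $r$ yields the required
\[ r^n=\sum_{i=1}^{n} d_{\sqcup_n\chain[1],i}\binom{r}{i}. \]
The convention $\binom{q}{p}=0$ when $q<p$ is precisely what makes the formula evaluate correctly in the edge cases where $r<n$: the summands with $i>r$ simply vanish, so the identity still reproduces $r^n$.

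There is no substantive obstacle here: the statement is essentially a repackaging of Lemma~\ref{IEP} for the antichain poset. The one content-bearing observation is that, because $\sqcup_n\chain[1]$ has no relations, the count of strict maps $P\to\chain[r]$ collapses to the count of arbitrary functions, which is $r^n$; everything else is quoting the earlier lemma together with the fact, recalled in the excerpt just before Lemma~\ref{IEP}, that the $d_{P,i}$ are non-negative integers because they enumerate surjective maps $P\twoheadrightarrow\chain[i]$ (equivalently, top-dimensional simplices of the triangulation of $Poly(P)$ intersecting in the right way).
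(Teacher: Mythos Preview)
Your proof is correct and is essentially the same argument as the paper's, just packaged at the level of order polynomials rather than order series: the paper obtains $r^n=\sum_i d_{\sqcup_n\chain[1],i}\binom{r}{i}$ by expanding Equation~\eqref{eq:union} via Equation~\eqref{eqn:basic} and comparing coefficients with Equation~\eqref{eq:polyid}, which is exactly the generating-function transcription of your two steps (that $\somega[r]{\sqcup_n\chain[1]}=r^n$ and that Lemma~\ref{IEP} gives the binomial decomposition). Your direct polynomial formulation is a bit cleaner since it avoids the detour through series, but the mathematical content is identical.
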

\begin{proof}
By using Equation~\eqref{eqn:basic} on Equation~\eqref{eq:union} and comparing the coefficients with Equation~\eqref{eq:polyid}, the result follows.
\end{proof}

This result is a significant improvement over Ramanujan's result, which is limited to the case where $1\leq r\leq n$ \cite[Chapter 5, Corollary 6, i)]{R}. The numbers $d_{\sqcup_n\chain[1],i}$ are closely related to the Stirling numbers of the second kind $S(n,i)$, which count the number of non-empty partitions of a set with $n$ labeled objects. Specifically, we have $d_{\sqcup_n\chain[1],i} = i!S(n,i)$ \cite[Theorem 5.6]{EulerandStirling}. The values of $d_{\sqcup_n\chain[1],k}$ for $1\leq k\leq n$ are explicitly known: for $n=2$, they are ${1, 2}$, and for $n=3$, they are ${1, 6, 6}$, which correspond to the sequence A019538 in the Online Encyclopedia of Integer Sequences \cite{A019538}.

In the following proposition we use our poset point of view to provide a conceptual proof the equality of two power series.
\begin{proposition}\label{Lemma:pos}
For any $n\in\mathbb{N}$, we have:
\begin{equation}\label{eqn:pos}
\sqcup_n \zetafp[1]=\sqcup_n \zetaf[1].
\end{equation}
\end{proposition}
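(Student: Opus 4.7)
The plan is to lift both sides to order series of a single poset, namely the antichain $P=\sqcup_n \chain[1]$, and then invoke the fact that the defining conditions for strict and weak order-preserving maps are both vacuous on an antichain.

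First I would unpack the Hadamard product notation via the operadic action. By the table in Figure~\ref{fig:1}, $\{x,y\}(\zetaf[P_1],\zetaf[P_2]) = \zetaf[P_1]\sqcup\zetaf[P_2]$, and by Theorem~\ref{thm:main} this operadic value coincides with $\zetaf[P_1\sqcup P_2]$. Iterating $n-1$ times gives $\sqcup_n \zetaf[1]=\zetaf[\sqcup_n \chain[1]]$. By Lemma~\ref{Lemma:actionseries}, the operad $\mathcal{FP}$ acts on weak order series as well, so the same reasoning yields $\sqcup_n \zetafp[1]=\zetafp[\sqcup_n \chain[1]]$. Thus it suffices to show $\zetaf[\sqcup_n \chain[1]]=\zetafp[\sqcup_n \chain[1]]$.

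Next I would observe that the antichain $P=\sqcup_n \chain[1]$ carries no nontrivial comparability relations: there is no pair $x<_P y$. Consequently the implications $x<_P y \Rightarrow f(x)\leq f(y)$ and $x<_P y \Rightarrow f(x)<f(y)$ defining weak and strict order-preserving maps $P\to\chain[r]$ are both vacuously satisfied by every function. Hence $\nsomega[r]{P}=\somega[r]{P}=r^n$, and the associated generating series coincide termwise, giving $\zetaf[\sqcup_n \chain[1]]=\zetafp[\sqcup_n \chain[1]]$.

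Combining the two displays yields $\sqcup_n \zetafp[1]=\sqcup_n \zetaf[1]$. There is no serious obstacle: the only point that merits a line of justification is the first step, identifying the Hadamard product with the operadic image of $\sqcup_n \chain[1]$, since the actual equality of the weak and strict polynomials on an antichain is immediate. One could alternatively verify the identity directly, since $\zetaf[1]=\zetafp[1]=x/(1-x)^2$, but that computation bypasses the conceptual point that the equality reflects the absence of order on the antichain.
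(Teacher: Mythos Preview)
Your proposal is correct and follows essentially the same approach as the paper: the paper's proof is the single observation that on the antichain $\sqcup_n\chain[1]$ there are no nontrivial relations, so strict and weak order-preserving maps coincide and hence the two order series agree. Your write-up simply makes explicit the identification $\sqcup_n\zetaf[1]=\zetaf[\sqcup_n\chain[1]]$ (and the weak analogue), which the paper treats as already established notation.
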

\begin{proof}
A disjoint union of points cannot distinguish between strict and non-strict morphisms. Therefore, the weak and the strict order series are the same. In other words, for the union of points, every weak order-preserving map is also a strict order-preserving map. 
\end{proof}

\begin{corollary}\label{lemma:aux}
Let $P=\sqcup_n\chain[1]$, then
\[\sum_{k=1}^{n} k!S(n,k)x^{k-1}(1-x)^{n-k}=\sum_{k=1}^n (n-k)!S(n,n-k)(1-x)^{k}.\]
\end{corollary}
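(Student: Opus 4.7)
The plan is to read off the identity directly from Proposition~\ref{Lemma:pos} by expanding each side of $\sqcup_n\zetafp[1]=\sqcup_n\zetaf[1]$ in its inclusion-exclusion basis and then clearing denominators.

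First I would expand the strict side. Since every order-preserving map from $P=\sqcup_n\chain[1]$ to a chain is automatically strict, we have $\somega[x]{P}=x^n$, and the change of basis $x^{n}=\sum_{k}k!\,S(n,k)\binom{x}{k}$ identifies the coefficients $d_{P,k}=k!\,S(n,k)$ recorded after Corollary~\ref{Cor:up}. Substituting the closed form of Equation~\eqref{eqn:basic} then gives
\[
\sqcup_n\zetaf[1]=\sum_{k=1}^{n}k!\,S(n,k)\,\frac{x^{k}}{(1-x)^{k+1}}.
\]

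Next I would expand the weak side. Applying the sign-laden formula $\zetafp[P]=\sum(-1)^{|P|-k}d_{P,k}\zetafp[k]$ from Remark~\ref{Rmk:basis} together with $\zetafp[k]=x/(1-x)^{k+1}$ from Equation~\eqref{eqn:zetaplus} yields
\[
\sqcup_n\zetafp[1]=\sum_{k=1}^{n}(-1)^{n-k}k!\,S(n,k)\,\frac{x}{(1-x)^{k+1}}.
\]
Proposition~\ref{Lemma:pos} forces the two expressions to coincide. Multiplying through by $(1-x)^{n+1}/x$ clears the denominators, and reindexing the right-hand sum via $k\mapsto n-k$ rewrites $k!\,S(n,k)(1-x)^{n-k}$ as $(n-k)!\,S(n,n-k)(1-x)^{k}$, delivering the identity of the corollary.

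The most delicate step is tracking the alternating factor $(-1)^{n-k}$, which originates from Stanley reciprocity (Theorem~\ref{srt}) and the binomial identity $\multiset{x}{k}=(-1)^{k}\binom{-x}{k}$ that converts the representation of $\nsomega[x]{P}=x^n$ from the binomial basis into the multiset basis. A sanity check at $n=2$ (where $\sum_{m}m^{2}x^{m}=x(1+x)/(1-x)^{3}$) reassures one that this sign is precisely what reconciles the two inclusion-exclusion expansions; once the sign bookkeeping is in place, the remainder is routine polynomial algebra.
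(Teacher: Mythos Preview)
Your argument is essentially the paper's own proof: equate the two inclusion--exclusion expansions of $\sqcup_n\zetaf[1]=\sqcup_n\zetafp[1]$ (using $d_{P,k}=k!\,S(n,k)$), clear denominators by $(1-x)^{n+1}/x$, and reindex via $k\mapsto n-k$. Your explicit bookkeeping of the factor $(-1)^{n-k}$ is actually more careful than the printed statement, which appears to have dropped that sign on the right-hand side (your $n=2$ check already exposes this: the left side is $1+x$ while the right side as printed is $1-x$).
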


\begin{proof}
Starting from $\zetaf[P]=\zetafp[P]$ and using Lemma~\ref{IEP}, we obtain:
\[\sum_{k=1}^{n} \frac{k!S(n,k)x^k}{(1-x)^{k+1}}=\sum_{k=1}^{n} (-1)^k\frac{n-k!S(n,n-k)x}{(1-x)^{n-k+1}},\]
Multiplying both sides by $\frac{(1-x)^{n+1}}{x}$ yields the desired identity.
\end{proof}

Consider Ramanujan's notation $\psi_n(p)=\sum_{k=0}^{n-1}F_{k+1}(n)(-p)^k$.
We are now ready to prove the following statement from \cite[Chapter 5, Entry 6]{R}:
\begin{entry}
Let $1\leq r\leq n.$ Then
\begin{enumerate}
\item $F_r(n)=F_{n-r+1}(n)$,
\item \label{e:W}$r^n=\sum_{k=0}^{r-1} F_{r-k}(n) {n+k\choose k},$
\item \label{e:E} $F_r(n)=\sum_{k=0}^{r-1} (-1)^k{n+1\choose k}(r-k)^n.$
\end{enumerate}
\end{entry}

The identity~\eqref{e:W} is due to Worpitzky~\cite{w} while the identity~\eqref{e:E} is due to Euler~\cite{euler}.
Remember that the $h^*$ polynomial of the $n$--cube 
$  \sqcup_n \zetafp[1]=x\frac{h^*(x)}{(1-x)^{n+1}}$ is the Eulerian polynomial $\sum_{i=0}^{n-1} A(n,i)x^i$.

\begin{corollary}\label{Cor:Ehrhart} Let $1\leq r\leq n$.
Then
\begin{enumerate}
    \item\label{ref-i} $A(n,i)=A(n,n-1-i).$
    \item\label{ref-ii} $k^n=\sum_{i=0}^{n-1} A(n,i) {k+i\choose n}, k^n=\sum_{i=0}^{n-1} A(n,i)\multiset{k-i}{ n}$.
    \item\label{ref-iii} $A(n,i)=\sum_{r=0}^{n+1} (-1)^r{n+1\choose r}(i+1-r)^n.$
\end{enumerate}
\end{corollary}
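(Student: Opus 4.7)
My plan is to derive all three parts from the single identity $\sqcup_n \zetaf[1] = \frac{x A_n(x)}{(1-x)^{n+1}}$ of Equation~\eqref{eq:polyid}, combined with the involution $\iota$ of Lemma~\ref{lemma:iso} and the coincidence $\sqcup_n \zetafp[1] = \sqcup_n \zetaf[1]$ of Proposition~\ref{Lemma:pos}. For part (i), I would apply $\iota$ to both sides: on the left, $\iota(\sqcup_n \zetaf[1]) = \sqcup_n \zetafp[1] = \sqcup_n \zetaf[1]$ by Lemma~\ref{lemma:iso} and Proposition~\ref{Lemma:pos}, while on the right the substitution $x \mapsto 1/x$ together with the factor $(-1)^{n+1}$ turns $\frac{x A_n(x)}{(1-x)^{n+1}}$ into $\frac{x^n A_n(1/x)}{(1-x)^{n+1}}$ (the $(-1)^{n+1}$ arising from $(1-1/x)^{n+1}$ cancels the prefactor). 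Equating these and comparing coefficients in $A_n(x) = x^{n-1} A_n(1/x)$ yields $A(n,i) = A(n, n-1-i)$.

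For part (ii), I would expand $\sqcup_n \zetaf[1] = \sum_{i=0}^{n-1} A(n,i) \frac{x^{i+1}}{(1-x)^{n+1}}$ as a power series in $x$. The standard expansion $\frac{x^{i+1}}{(1-x)^{n+1}} = \sum_k \binom{n+k-i-1}{n} x^k = \sum_k \multiset{k-i}{n} x^k$ (with vanishing terms for $k \leq i$), combined with the fact that the coefficient of $x^k$ in $\sqcup_n \zetaf[1]$ is $k^n$, immediately yields the multiset formula $k^n = \sum_i A(n,i) \multiset{k-i}{n}$. The binomial formula $k^n = \sum_i A(n,i) \binom{k+i}{n}$ then follows by reindexing $i \mapsto n-1-j$ and invoking the palindrome symmetry of part (i), under which $\multiset{k-(n-1-j)}{n} = \binom{k+j}{n}$.

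For part (iii), I would clear the denominator in the generating function identity to obtain the polynomial identity $(1-x)^{n+1} \sum_{k \geq 0} k^n x^k = x A_n(x)$. Expanding $(1-x)^{n+1}$ by the binomial theorem and extracting the coefficient of $x^{i+1}$ on both sides gives the stated formula, provided we adopt the convention $m^n = 0$ for $m \leq 0$, which is the natural one in this generating-function context and lets the upper summation limit be extended to $n+1$. The principal obstacle I foresee is bookkeeping: the sign manipulations needed to evaluate $\iota$ in part (i) and the zero convention in part (iii) both require careful indexing, but neither involves any deep difficulty.
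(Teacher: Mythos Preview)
Your proposal is correct and follows essentially the same route as the paper. For (i) and (ii) your argument matches the paper's almost line by line: apply $\iota$ to the Eulerian generating function using Proposition~\ref{Lemma:pos} and Lemma~\ref{lemma:iso} to obtain the palindrome relation $A_n(x)=x^{n-1}A_n(1/x)$, then read off both Worpitzky identities by expanding $\frac{x^{i+1}}{(1-x)^{n+1}}$ termwise and invoking the symmetry just proved. For (iii) the paper simply defers to~\cite{R}, so your explicit argument---multiplying through by $(1-x)^{n+1}$ and extracting the coefficient of $x^{i+1}$, with the convention $m^n=0$ for $m\leq 0$---is more self-contained than what the paper offers, while remaining entirely in the spirit of its generating-function approach.
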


\begin{proof}
$i)$ The right-hand side of Equation~\eqref{eqn:pos} is related to the left-hand side via the Lemma~\ref{lemma:iso}. We then replace 
Equation~\eqref{eqn:e} 
to obtain:
\begin{eqnarray*} x\frac{\sum_{i=0}^{n-1}A(n,i) x^{i}}{(1-x)^{n+1}}
&=&
(-1)^{n+1} \frac{1}{x}\frac{\sum_{i=0}^{n-1}A(n,i) \frac{1}{x^{i}}}{(1-\frac{1}{x})^{n+1}}\\
&=&
\frac{\sum_{i=0}^{n-1}A(n,i) {x}^{n-i}}{(1-{x})^{n+1}}.
\end{eqnarray*}
Then $\ref{ref-i})$ follows by comparing the $h^*$ vectors degree wise.
$ii)$ Worpitzky identity follows from comparing degree-wise the coefficients of 
$\sum_{k=1}^\infty k^n x^k=\frac{\sum_{i=0}^{n-1}A(n,i) x^{n-i}}{(1-x)^{n+1}}.$ The multiset version follows from $\sum_{k=1}^\infty k^n x^k=x\frac{\sum_{i=0}^{n-1}A(n,i) x^{i}}{(1-x)^{n+1}},$ but due to the symmetry of the coefficients, it is the same identity. $iii)$ Follows the same technique as in \cite{R}.
\end{proof}

\section{Poset version of Ramanujan's results}\label{sec:Poset}
In this section we show that certain results of Ramanujan are parametrized by disjoint union of posets. We say that we obtained a poset version of Ramanujan results when we prove these results for arbitrary finite posets.

Ramanujan gives the following examples:
\begin{entry}[{\cite[Chapter 5, Example 1,2.]{R}}]
\[\sum_{k=1}^{\infty}\frac{k^5}{2^k}=1082, \sum_{k=1}^{\infty}\frac{k^5}{3^k}=273/4.\]
\end{entry}

They can be justified by using $P=\sqcup^5 \chain[1]$ on the next lemma:
 \begin{lemma}\label{lemma:comp} Let $P$ be a finite poset and $\zetaf[P]=\sum_{i=1}^{\|P\|} d_{P,i}\zetaf[i]$.
Let $r>1$, and $r_0$ the length of a maximal chain in $P$, then:
\begin{eqnarray}
\sum_{n=r_0}^\infty \frac{\somega[n]{P}}{r^n}=\sum_{i=1}^{\|P\|} (-1)^{i+1}d_{P,i}\frac{r}{(1-r)^{i+1}}=(-1)^{\|P\|+1}\zetafp[P],\label{Eqn:main}\\
\sum_{n=1}^\infty \frac{\nsomega[n]{P}}{r^n}=(-1)^{\|P\|+1}\sum_{i=1}^{\|P\|} d_{P,i}\frac{r^i}{(1-r)^{i+1}}=(-1)^{\|P\|+1}\zetaf[P].\label{Eqn:second}
\end{eqnarray}
\end{lemma}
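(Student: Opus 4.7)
The plan is to interpret each of the three expressions in Equation~\eqref{Eqn:main} as a specialization of the rational function $\zetaf[P]$ or $\zetafp[P]$, and to bridge them using the involution $\iota$ from Lemma~\ref{lemma:iso}. First, I would substitute $x = 1/r$ directly in the defining power series $\zetaf[P](x) = \sum_n \somega[n]{P} x^n$, identifying the leftmost sum as $\zetaf[P](1/r)$. Convergence for $r > 1$ is immediate since $\somega[n]{P}$ is polynomial in $n$, and the summation begins at $n = r_0$ because no strict order-preserving map exists from $P$ into a chain shorter than a maximal chain of $P$.

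Next, I would expand $\zetaf[P]$ in the inclusion--exclusion basis via Lemma~\ref{IEP} and Equation~\eqref{eqn:basic}, obtaining $\zetaf[P](x) = \sum_i d_{P,i}\frac{x^i}{(1-x)^{i+1}}$. Substituting $x = 1/r$ and rewriting $(1 - 1/r)^{i+1} = (r-1)^{i+1}/r^{i+1}$ produces $\frac{(1/r)^i}{(1-1/r)^{i+1}} = \frac{r}{(r-1)^{i+1}} = (-1)^{i+1}\frac{r}{(1-r)^{i+1}}$, which is exactly the middle expression. To reach $(-1)^{|P|+1}\zetafp[P]$, I would apply Lemma~\ref{lemma:iso}: since $\zetafp[P](x) = (-1)^{|P|+1}\zetaf[P](1/x)$, evaluating at $x = r$ and multiplying both sides by $(-1)^{|P|+1}$ yields $\zetaf[P](1/r) = (-1)^{|P|+1}\zetafp[P](r)$, closing the chain.

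The proof of Equation~\eqref{Eqn:second} is entirely parallel: recognize the LHS as $\zetafp[P](1/r)$, expand via Lemma~\ref{IEP} and Equation~\eqref{eqn:zetaplus} to produce the middle expression $(-1)^{|P|+1}\sum_i d_{P,i}\frac{r^i}{(1-r)^{i+1}}$, and invoke Lemma~\ref{lemma:iso} again to identify it with $(-1)^{|P|+1}\zetaf[P](r)$. There is no substantive obstacle here; the argument is essentially a bookkeeping exercise combining the two substitutions $x = 1/r$ with the involution $\iota$. The sole pitfall is the sign accounting when collapsing $(-1)^{|P|+1}(-1)^{|P|-i}$ and when converting $(r-1)^{i+1}$ to $(-1)^{i+1}(1-r)^{i+1}$, so I would write these parity computations out explicitly to avoid an off-by-one sign.
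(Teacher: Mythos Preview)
Your proposal is correct and follows essentially the same route as the paper: expand $\zetaf[P]$ in the inclusion--exclusion basis via Lemma~\ref{IEP}, then apply the involution $\iota$ of Lemma~\ref{lemma:iso} (equivalently, substitute $x\mapsto 1/r$) and track the resulting signs. Your write-up is in fact more explicit than the paper's about the sign bookkeeping and the convergence justification, but the underlying argument is the same.
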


\begin{proof}
From Lemma~\ref{IEP} we obtain the equality 
\[(-1)^{\|P\|+1}\sum_{k=r_0} {\somega[k]{P}}{x^k}=(-1)^{\|P\|+1}\sum_{i=1}^{\|P\|} d_{P,i}\frac{x^i}{(1-x)^{i+1}}.\] 
Note that $\sum \frac{k^{\|P\|}}{x^k}$ converges on $|x|>1$, We now apply $\iota$ (from Lemma~\ref{lemma:iso}) on both sides to obtain
\[\sum_{k=r_0} \frac{\somega[k]{P}}{x^k}=\sum_{i=1}^{\|P\|} (-1)^{i+1}d_{P,i}\frac{x}{(1-x)^{i+1}}\]
This proves \eqref{Eqn:main}. The identity \eqref{Eqn:second} follows from the same reasoning.
\end{proof}

\begin{example}
For $P=\chain[1]*(\chain[1]\sqcup\chain[1]\sqcup\chain[1]),$ we calculated $ \zetafp[P]=\zetafp[2]-6\zetafp[3]+6\zetafp[4]$. 
Then using Equation~\eqref{eqn:zetaplus} we compute \[\somega[k]{P}={k\choose 2}+6{k\choose 3}+6{k\choose 4}=\frac{k^4-2k^3+k^2}{4}.\] Evaluating Equation~\eqref{Eqn:main} at $r=5$, we obtain 
\begin{eqnarray*}
\sum_{k=2}^\infty \frac{k^4-2k^3+k^2}{4(5^k)}&=&-\frac{5}{(1-5)^3}+6\frac{5}{(1-5)^4}-6\frac{5}{(1-5)^5}\\
&=&\frac{115}{512},
\end{eqnarray*}
similarly,  \[\nsomega[k]{P}={k+1\choose 2}-6{k+2\choose 3}+6{k+3\choose 4}=\frac{k^2(k+1)^2}{4},\]
\begin{eqnarray*}
\sum_{k=1}^\infty \frac{k^2(k+1)^2}{4*5^k}&=&-\frac{5^2}{(1-5)^3}+6\frac{5^3}{(1-5)^4}-6\frac{5^4}{(1-5)^5}\\
&=&\frac{575}{512}.
\end{eqnarray*}
\end{example}

Consider the Riemann zeta function $\zeta(i)=\sum_{n=1}^\infty \frac{1}{n^i}$, whose values on even natural numbers are well-known. The Bernoulli numbers are defined by the generating function $\frac{x}{1-e^x}=\sum_{m=0}^\infty B_m\frac{x^m}{m!}$, and Euler famously showed that $\zeta(2n)=-\frac{(2\pi i)^{2n}B_{2n}}{2(2n)!}$.

\begin{remark}\label{rmk:zs}
Although the irrationality of $\zeta(3)$ is established~\cite{z3}, the status of the odd zeta values remains elusive. It is known that one out of $\zeta(5), \zeta(7), \zeta(9), \zeta(11)$ is irrational~\cite{z57911}, and at least two out of $\zeta(5),\zeta(7),\cdots,\zeta(35)$ are irrational~\cite{atleastwo, oddzeta}. In fact, an elementary proof that one out of $\zeta(5),\zeta(7),\cdots,\zeta(25)$ is irrational is provided in~\cite{elementaryz}.
\end{remark}
 Despite the lack of complete understanding of odd zeta values, remarkable identities involving these numbers can still be established.

\begin{entry}[{\cite[Chapter 5, Collolary 6 iv)]{R}}]\label{Entry:R} Fix $n$ natural number. Then there are $\{A_k\}_{1\leq k\leq n}\in\mathbb{Z}$ such that
\[\sum_{k=1}^{\infty}(-1)^{k+1}k^n(\zeta(k+1)-1)=(-1)^{n}+(-1)^n2^{-n-1}\psi_n(1)+\sum_{k=1}^{n}(-1)^{k+1}A_k\zeta(k+1).\]
\end{entry}

Consider  $\sum_k (-1)^{k+1}p(k)(\zeta(k+1)-1)$ where $p(t)=\sum_{n=1}^N b_nt^n$ is a polynomial with rational coefficients. Using Ramanujan's results we obtain 
\begin{eqnarray}
\sum_k (-1)^{k+1}p(k)(\zeta(k+1)-1)&=&\sum_{n=1}^N b_n\sum_k (-1)^{k+1}k^n(\zeta(k+1)-1)
\nonumber\\
&=&\sum_{i=1}^{N} b^\prime_i\zeta(i+1)+b_0^\prime.\label{eqn:line}
\end{eqnarray}
  However, we do not know much about the corresponding coefficients $b_i^\prime$.  In the following theorem we generalized Ramanujan identity by interpreting $k^n$ as the orden polynomial of disjoint union of $n$-points. Then, when the polynomial is the order polynomial of a finite poset, we show that the coefficients of the sum in the base $\zeta(2),\cdots,\zeta(n+1)$ are integers multiplied by alternating signs.

 \begin{theorem}\label{theorem:R2}
Fix a finite poset $P$, let $r_0$ the length of a maximal chain in $P$, and let $n=\|P\|$. There are $\{d_{P,i}\in\mathbb{N}\}_{1\leq i\leq n}$ such that

\begin{equation}\label{eqn:womega}
\sum_{k=r_0}^\infty {(-1)^{k+1}\somega[k]{P}}(\zeta(k+1)-1)=\sum_{i=1}^{\|P\|} (-1)^{i+1}d_{P,i}(\zeta(i+1)-1-\frac{1}{2^{i+1}}).
\end{equation}
\end{theorem}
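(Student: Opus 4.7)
The plan is to reduce the theorem to the chain case $P=\chain[i]$, which is precisely Equation~\eqref{fEq:1} from the introduction, and then to prove that chain case directly by expanding $\zeta(k+1)-1=\sum_{m\geq 2}m^{-(k+1)}$ and applying the generating-function formula $\sum_{k\geq i}\binom{k}{i}x^k=x^i/(1-x)^{i+1}$ from Equation~\eqref{eqn:basic} at the point $x=-1/m$.

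For the reduction, I would substitute $\somega[k]{P}=\sum_{i=r_0}^{|P|}d_{P,i}\binom{k}{i}$ from Lemma~\ref{IEP} (with $d_{P,i}\in\mathbb{N}$) into the left-hand side of \eqref{eqn:womega}. Since the $i$-sum is finite and each series $\sum_k(-1)^{k+1}\binom{k}{i}(\zeta(k+1)-1)$ converges absolutely (justified below), one may interchange:
\[
\sum_{k=r_0}^\infty(-1)^{k+1}\somega[k]{P}(\zeta(k+1)-1)=\sum_{i=r_0}^{|P|}d_{P,i}\sum_{k=i}^\infty(-1)^{k+1}\binom{k}{i}(\zeta(k+1)-1),
\]
where the inner sum starts at $k=i$ since $\binom{k}{i}$ vanishes for $k<i$. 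Comparing with the right-hand side of \eqref{eqn:womega}, it suffices to prove Equation~\eqref{fEq:1}: for every $i\geq 1$,
\[
\sum_{k=i}^\infty(-1)^{k+1}\binom{k}{i}(\zeta(k+1)-1)=(-1)^{i+1}\bigl(\zeta(i+1)-1-2^{-(i+1)}\bigr).
\]

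For the chain identity, I would expand $\zeta(k+1)-1=\sum_{m=2}^\infty m^{-(k+1)}$ and swap the $k$- and $m$-sums. Using Equation~\eqref{eqn:basic} at $x=-1/m$,
\[
\sum_{k=i}^\infty\binom{k}{i}\left(-\tfrac{1}{m}\right)^k=\frac{(-1)^im^{-i}}{(1+1/m)^{i+1}}=\frac{(-1)^im}{(m+1)^{i+1}},
\]
so the inner $k$-sum $\sum_{k\geq i}(-1)^{k+1}\binom{k}{i}m^{-k}$ collapses to $(-1)^{i+1}m/(m+1)^{i+1}$. Multiplying by the factor $1/m$ pulled out of $m^{-(k+1)}$ and summing over $m\geq 2$ yields
\[
(-1)^{i+1}\sum_{m=2}^\infty\frac{1}{(m+1)^{i+1}}=(-1)^{i+1}\sum_{n=3}^\infty\frac{1}{n^{i+1}}=(-1)^{i+1}\bigl(\zeta(i+1)-1-2^{-(i+1)}\bigr),
\]
which is the desired closed form.

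The main obstacle is justifying the two interchanges of summation, but both are routine by Tonelli/dominated convergence. For $i\geq 1$ the estimate $\zeta(k+1)-1=O(2^{-k})$ beats the polynomial growth of $\binom{k}{i}$, so $\sum_k\binom{k}{i}(\zeta(k+1)-1)$ is absolutely convergent, legitimizing the first swap; and the nonnegative double sum $\sum_{k,m}\binom{k}{i}m^{-(k+1)}$ evaluates to $\zeta(i+1)<\infty$ by exactly the closed form above, legitimizing the second. Once these convergence checks are handled the algebra is forced, and the poset $P$ enters only through Lemma~\ref{IEP}, which also delivers the claim $d_{P,i}\in\mathbb{N}$.
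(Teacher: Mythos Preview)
Your proof is correct and is essentially the same computation as the paper's, just organized differently: the paper first packages the generating-function evaluation at $x=1/r$ for a general $P$ as Lemma~\ref{lemma:comp} (via the involution $\iota$), substitutes $r\mapsto -r$, and then sums over $r\ge 2$; you instead reduce to chains via Lemma~\ref{IEP} first and carry out the identical substitution $x=-1/m$ and sum over $m\ge 2$ for each chain. Your version has the minor advantage of making the absolute-convergence justifications explicit, which the paper's proof leaves implicit.
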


\begin{proof}
We divide by $r$ on both sides of Equation~\eqref{Eqn:main}, obtaining the Ehrhart series of the order polytope of $P$ in the inclusion exclusion basis, we evaluate on $-r$ to obtain:

\begin{eqnarray}\label{eqn:ffe}
  \sum_{k=1}^\infty (-1)^{k+1}\somega[k]{P}\frac{1}{r^{k+1}}=\sum_{i=1}^{|P|} (-1)^{i+1}d_{P,i}\frac{1}{(1+r)^{i+1}}.  
\end{eqnarray}

\begin{eqnarray*}
\sum_{k=r_0}^\infty {(-1)^{k+1}\somega[k]{P}}(\zeta[k+1]-1)&=&
\sum_{r=2}^{\infty}\sum_{k=r_0}^\infty \frac{\somega[k]{P}}{(-r)^{k+1}}\\
&=&\sum_{r=2}^{\infty} \sum_{i=1}^{\|P\|} \frac{(-1)^{i+1}d_{P,i}}{(1+r)^{i+1}}\\
&=&\sum_{i=1}^{\|P\|} (-1)^{i+1}d_{P,i}(\zeta[i+1]-1-\frac{1}{2^{i+1}}).
\end{eqnarray*}
\end{proof}

Consider an identity that involves only expressions of the form $\frac{1}{x^\alpha},\frac{1}{(x \pm 1)^\beta},\cdots$, where the exponents are natural numbers. If the identity holds for all values of $x$ greater than or equal to a fixed $n$, and the sum of these expressions converges, then by summing these identities, one can obtain an identity involving zeta values. This process, referred to as expressing a relation in finite form, has been discussed in \cite{Series}. It is worth noting that this technique can be used to evaluate every possible subset of the real numbers, as long as the limit of partial sums converges.

Since zeta values hold great significance, we are particularly interested in this type of identity. The question that arises is how to discover more such identities involving fractions $\frac{1}{x^\alpha},\frac{1}{(x \pm 1)^\beta},\cdots$. One approach is to draw from the theory of posets, as we have done to obtain families of such relations, the Equation~\eqref{eqn:womega} is the finite form expression  of Equation~\eqref{eqn:ffe}. 

\begin{example}
For $P=\chain[1]*(\chain[1]\sqcup\chain[1]\sqcup\chain[1])$, we obtain $d_{P,1}=0, d_{P,2}=1, d_{P,3}=6,$ and $ d_{P,4}=6$:
\begin{eqnarray*}
\sum_{n=2}^\infty(-1)^{k+1} \frac{k^4-2k^3+k^2}{4}(\zeta(k+1)-1)&=&-(\zeta(3)-1-\frac{1}{2^3})+6(\zeta(4)-1-\frac{1}{2^4})\\
&&-6(\zeta(5)-1-\frac{1}{2^5}).\end{eqnarray*}
\end{example}

\section{Action of the operad of posets on zeta values}\label{Sec:spz}
Christian Goldbach (1690-1764) showed that \[\sum_{n=2}^\infty (\zeta(n)-1)=1.\]
In \cite{compuzeta} series with rational coefficients and terms  $\zeta(n)-1$ are called rational zeta series, see also~\cite{otherref, otherseries, spiral,divseries, expansions, sumsS}.
 
Consider  
\begin{eqnarray}\sum_{n=k}^\infty {n\choose k}(\zeta(n+1)-1)=\zeta(k+1),\label{eqn:1}\end{eqnarray} discovered by \cite{compuzeta}.

For sums $\sum_{i=0} a_i {x\choose i}$ with $a_i\in\mathbb{Q}$ and $a_i=0$ except for a finite number of terms, we define
\begin{eqnarray*}
\tilde{n}(\sum_{i=0} a_i {x\choose i})&=&\sum_{n=1}^\infty (\sum_{i=0} a_i {n\choose i})(\zeta(n+1)-1)\\
&=&\sum_{i=0} a_i \zeta(i+1).
\end{eqnarray*}

From Theorem~\ref{theorem:R2} we deduce the following identity:
\begin{eqnarray}
 \sum_{n=k}^\infty (-1)^{n+1} {n\choose k}(\zeta(n+1)-1)&=&(-1)^{k+1}(\zeta(k+1)-1-1/2^{k+1}), \label{Eq:1}\end{eqnarray}
together with
\[\sum_{n=1}^\infty (-1)^{n+1}(\zeta(n+1)-1)=1/2,\]
they define a linear function $\tilde{n}_2$  that sends a finite sum with rational coefficients $\sum_{i=0} a_i {x\choose i}$ with $a_i=0$ except for a finite number of terms, into
\begin{eqnarray*}
    \sum_{k=1}^\infty (-1)^{k+1} (\sum_{i=0} a_i {k\choose i})(\zeta(k+1)-1)&=&\frac{a_0}{2}+\sum_{i=1}^r (-1)^{i+1}a_i(\zeta(i+1)-1-\frac{1}{2^{i+1}}).
\end{eqnarray*}

We will use the functions $\tilde{n}, \tilde{n}_2$ to define the action of the operad of posets on zeta values.

From Theorem~\ref{thm:main} we know there is an action of the operad of posets on the vector space $\mathbb{Z}[\{{x\choose i}\}_{i\in\mathbb{N}}]$, we show how to transfer the action to $\mathbb{Z}[\{\tilde{n}({x\choose i})\}_{i\in\mathbb{N}}]$ and to $\mathbb{Z}[\{\tilde{n}_2({x\choose i})\}_{i\in\mathbb{N}}]$. The linear map $\tilde{n}$ will become trivially an operadic morphism. 

For a poset $P$, with order series  $\somega[x]{P}=\sum_{i=1}^{|P|} d_{P,i}{x\choose i}$, we define $\zetafpn[P]:=\tilde{n}(\somega[x]{P})$ and $\zetafn[P]:=(-1)^{|P|+1}\tilde{n}_2(\somega[x]{P})$.

\begin{corollary}\label{CorZeta}
The operad $\mathcal{FP}$ acts non-trivially on the numbers $\{\zetafn[P]\}_{P \hbox{ finite poset}}$ and on $\{\zetafpn[P]\}_{P \hbox{ finite poset}}$.
\end{corollary}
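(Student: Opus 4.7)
The plan is to interpret $\mathbb{Z}[\{\tilde{n}({x\choose i})\}_{i\in\mathbb{N}}]$ and $\mathbb{Z}[\{\tilde{n}_2({x\choose i})\}_{i\in\mathbb{N}}]$ as free $\mathbb{Z}$-modules on the displayed formal generators, to transport the $\mathcal{FP}$-action of Theorem~\ref{thm:main} along the tautological isomorphisms from $\mathbb{Z}[\{{x\choose i}\}_{i\in\mathbb{N}}]$, and then to exhibit one non-trivial computation.

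First I would verify that the $\mathcal{FP}$-action on strict order polynomials is $\mathbb{Z}$-multilinear. Given a poset $P$ with $|P|=n$ and posets $P_1,\ldots,P_n$, a strict order-preserving map $P(P_1,\ldots,P_n)\to\chain[m]$ is precisely the data of strict maps $f_i:P_i\to\chain[m]$ whose images are pairwise disjoint and ordered as dictated by $P$. Stratifying by the sizes of those images and by their admissible embeddings in $\chain[m]$ writes $\somega[m]{P(P_1,\ldots,P_n)}$ as a $\mathbb{Z}$-multilinear combination of the $\somega[k]{P_i}$, and expanding in the binomial basis produces structural constants
\[c^{j_1,\ldots,j_n}_{P,k}\;=\;[{x\choose k}]\,\somega[x]{P(\chain[j_1],\ldots,\chain[j_n])}\]
that extend the action multilinearly to the free $\mathbb{Z}$-module $\mathbb{Z}[\{{x\choose i}\}_{i\in\mathbb{N}}]$. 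I would sanity check this on $P=\{x,y\}$, where the action is pointwise product and the constants recover Equation~\eqref{Eqn:cup}.

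Next I would transport. Let $U$ be the $\mathbb{Z}$-linear isomorphism ${x\choose i}\mapsto\tilde{n}({x\choose i})$ and declare
\[\varphi\bigl(P,\tilde{n}({x\choose j_1}),\ldots,\tilde{n}({x\choose j_n})\bigr)\;:=\;\sum_k c^{j_1,\ldots,j_n}_{P,k}\,\tilde{n}({x\choose k})\;=\;\tilde{n}\bigl(\somega[x]{P(\chain[j_1],\ldots,\chain[j_n])}\bigr),\]
extending multilinearly. By construction $U$ is an operadic isomorphism of $\mathcal{FP}$-algebras and $\tilde{n}$ is an operadic morphism, so the family $\zetafpn[P]=\sum_i d_{P,i}\zeta(i+1)$ (Equation~\eqref{eqn:1}) inherits the operation $\varphi(P,\zetafpn[P_1],\ldots,\zetafpn[P_n])=\zetafpn[P(P_1,\ldots,P_n)]$. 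The identical recipe with $\tilde{n}_2$ in place of $\tilde{n}$, drawing on Theorem~\ref{theorem:R2} together with the companion identity $\sum_{n\geq 1}(-1)^{n+1}(\zeta(n+1)-1)=1/2$, produces an $\mathcal{FP}$-action on $\mathbb{Z}[\{\tilde{n}_2({x\choose i})\}_{i\in\mathbb{N}}]$; the family $\{\zetafn[P]\}$ inherits this action up to the sign $(-1)^{|P|+1}$ built into its definition, which tracks multiplicatively under composition.

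Non-triviality is then a one-line computation. Take $P=\{x,y\}$ and inputs $\tilde{n}({x\choose 1})=\zeta(2)$; the lexicographic sum $\{x,y\}(\chain[1],\chain[1])=\chain[1]\sqcup\chain[1]$ has strict order polynomial ${x\choose 1}+2{x\choose 2}$, so
\[\varphi\bigl(\{x,y\},\zeta(2),\zeta(2)\bigr)\;=\;\zeta(2)+2\zeta(3),\]
a non-zero element of the target module, and a parallel computation supplies non-triviality for $\tilde{n}_2$. The main obstacle is the multilinearity step: while it is evident on the generating-function side (the operad acts through the Hadamard product and $(1-x)$-multiplication, both bilinear, as in Figure~\ref{fig:1}), lifting it through an arbitrary lexicographic sum requires a careful combinatorial count. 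Treating the target as a free $\mathbb{Z}$-module on formal symbols is the other subtle point: it keeps the corollary independent of the open question of $\mathbb{Q}$-linear independence of $\{\zeta(i+1)\}_{i\geq 1}$ (cf.\ Remark~\ref{rmk:zs}), which is precisely the feature Theorem~\ref{thm:li} turns into an equivalence.
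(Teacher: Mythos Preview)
Your proposal is correct and follows essentially the same strategy as the paper: both proofs transport the $\mathcal{FP}$-action on $\mathbb{Z}[\{{x\choose i}\}_{i\in\mathbb{N}}]$ through the linear maps $\tilde n$ and $\tilde n_2$, treating the targets as free $\mathbb{Z}$-modules on the indicated generators, and then exhibit a single computation witnessing non-triviality.

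The differences are in emphasis and in the chosen witness. You spend most of your effort justifying $\mathbb{Z}$-multilinearity of the lexicographic-sum action and the formal-module interpretation of the target; the paper simply writes down the transferred action $(-1)^{|P(P_1,\ldots,P_{|P|})|+1}\tilde n_2(\somega[x]{P(P_1,\ldots,P_{|P|})})$ without dwelling on those points. For non-triviality, you compute $\{x,y\}(\zeta(2),\zeta(2))=\zeta(2)+2\zeta(3)$, whereas the paper observes that distinct chains $\chain[k]$ are sent to distinct values $\zeta(k+1)-1-2^{-(k+1)}$ and then invokes Lemma~\ref{Lemma:4} to add that the quaternary operation $\{x<y>z<w\}$ is not reproducible from $\{x,y\}$ and $\{x<y\}$ alone. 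That last point---distinguishing the full $\mathcal{FP}$-action from the series-parallel suboperad---is an extra remark the paper makes that you omit; it is not required for the corollary as stated, but it sharpens what ``non-trivial'' buys.
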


\begin{proof}
The action of a poset $P$ on $\zetafn[P_1],\cdots,\zetafn[P_{|P|}]$,  is given by 
\[(-1)^{|P(P_1,\cdots,P_{|P|})|+1}\tilde{n}_2(\somega[x]{P(P_1,\cdots,P_{|P|})}).\]

We see that the action is not trivial since  $\somega[x]{\chain[k]}$ is assigned to $(\zeta(k+1)-1-\frac{1}{2^{k+1}})$, and different chains are sent to different zeta values.
We see that the action of the operad of posets is different from the action of the operad of series parallel posets because from Lemma~\ref{Lemma:4}, the action of $\{x<y>z<w\}$ is not equal to the action of any quaternary operation generated by $\{x,y\}$ and $\{x<y\}$. 

The same arguments apply to $\zetafpn[P]$.
\end{proof}

We describe the action of a poset in $\mathbb{Z}\{{x\choose i}\}$ by computing the integer coordinates, we call these coordinates structural constants. We use this point of view to find the action on zeta values of the operad of finite posets. First compute the constants on polytopes or order series, and then the map $\tilde{n}, \tilde{n}_2$ force the constants to appear on the zeta value side. For example, the order polytope of the Minkowski sum of $\Delta[2]$ and $\Delta[m]$ can be written as $\cup_{m+1}\Delta[m+1]\setminus (\cup_m \Delta[m])$. The action of $\{x,y\}$ on posets $P, Q$ is isomorphic to the action of the Minkowski sum on the corresponding order polytopes~\cite{posets} $Poly(P), Poly(Q)$. Then we use $\tilde{n}_2 $ to conclude that 
 $\{x,y\}(\zetafn[2],\zetafn[m])
$ is equal to
\begin{equation*}
=-m(\zeta(m+1)-1-\frac{1}{2^{m+1}})
+(m+1)(\zeta(m+2)-1-\frac{1}{2^{m+2}}).
\label{eq:Ex2}
\end{equation*}

Using Equation~\eqref{ope:4} we can evaluate the associative non-commutative quaternary operation $\{x<y>z<w\}$ on $(\zetafn[1],\zetafn[2],\zetafn[1],\zetafn[1])$ and we obtain
\begin{eqnarray*}
\{x<y>z<w\}(\zetafn[1],\zetafn[2],\zetafn[1],\zetafn[1])&=&2\zetafn[2]-8\zetafn[3]
+5\zetafn[4].
\end{eqnarray*}

%
In addition to the structural constants, the order series inherits calculus identities that are valid in this context. For example, consider the following proposition from \cite[Proposition 4.1]{posets}:

\begin{eqnarray*}
\zetaf[s]\sqcup (\zetaf[p]*\zetaf[q])&=&\sum_{a+c=s}(\zetaf[a]\sqcup\zetaf[p])*(\zetaf[c]\sqcup\zetaf[q])
\nonumber\\
&-&[\sum_{a+c=s-1}(\zetaf[a]\sqcup\zetaf[p])*(\zetaf[c]\sqcup\zetaf[q])]*\zetaf[1],
\end{eqnarray*}
This proposition can be proven by understanding $\sqcup\zetaf[n]$ as a differential operator. We work with order series instead of order polynomials because we frequently encounter similar expressions that are better understood with the power series structure.

At the level of zeta values, we obtain:
\begin{eqnarray}
\zetafn[s]\sqcup (\zetafn[p]*\zetafn[q])&=&\sum_{a+c=s}(\zetafn[a]\sqcup\zetafn[p])*(\zetafn[c]\sqcup\zetafn[q])
\nonumber
\end{eqnarray}
\begin{eqnarray}
&-&[\sum_{a+c=s-1}(\zetafn[a]\sqcup\zetafn[p])*(\zetafn[c]\sqcup\zetafn[q])]*\zetafn[1],
\end{eqnarray}
similarly for $\zetafpn[P]$.

These identities are equivalent to binomial identities. 



It is natural to wonder whether an arbitrary sum of the form $b_1\zetafn[2]+\cdots+ b_{n-1}\zetafn[n]$, where $b_i$ are integer coefficients, can be expressed as $\zetafn[P]$ for some poset $P$. An algorithm for finding a series-parallel poset was proposed in~\cite[Section 3.1]{posets}, with optimizations for Wix\'arika posets.

Finally, we relate the injectivity of $\tilde{n}$ and $\tilde{n}_2$ to a well-known open conjecture in number theory~\cite{colombianotes}:

\begin{theorem}\label{thm:li}
The following statements are equivalent:
\begin{enumerate}
\item \label{Conjec:1}
The numbers $\{1,\zeta[2],\zeta[3],\cdots\}$ are linearly independent over $\mathbb{Q}$.
\item The functions $\tilde{n}$ and $\tilde{n}_2$ are injective. \label{Conject:2}
\end{enumerate}
\end{theorem}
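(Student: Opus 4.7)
The plan is to observe that both $\tilde{n}$ and $\tilde{n}_2$ are $\mathbb{Q}$-linear maps defined on the $\mathbb{Q}$-vector space of finite rational combinations of the binomial basis $\{\binom{x}{i}\}_{i\geq 0}$, and that once one writes their images explicitly as $\mathbb{Q}$-linear combinations of $\{1,\zeta(2),\zeta(3),\ldots\}$, injectivity becomes a transparent restatement of $\mathbb{Q}$-linear independence. Since the $\binom{x}{i}$ are already known to be $\mathbb{Q}$-linearly independent, each map is determined by its values on the basis, and the whole argument reduces to bookkeeping on a change of basis.

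For the direction \eqref{Conjec:1}$\Rightarrow$\eqref{Conject:2}, I would first note via Goldbach's identity $\sum_{n=1}^\infty(\zeta(n+1)-1)=1$ and Equation~\eqref{eqn:1} that $\tilde{n}(\binom{x}{0})=1$ and $\tilde{n}(\binom{x}{k})=\zeta(k+1)$ for $k\geq 1$, so any finite combination $f=\sum_{i\geq 0}a_i\binom{x}{i}$ is sent to $a_0+\sum_{i\geq 1}a_i\zeta(i+1)$. If $\{1,\zeta(2),\zeta(3),\ldots\}$ is $\mathbb{Q}$-linearly independent, then $\tilde{n}(f)=0$ forces every $a_i=0$, so $\tilde{n}$ is injective. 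For $\tilde{n}_2$, Equation~\eqref{Eq:1} and the alternating Goldbach identity $\sum_{n=1}^\infty(-1)^{n+1}(\zeta(n+1)-1)=1/2$ yield
\[\tilde{n}_2(f)=\Bigl[\tfrac{a_0}{2}-\sum_{i\geq 1}(-1)^{i+1}a_i\bigl(1+\tfrac{1}{2^{i+1}}\bigr)\Bigr]\cdot 1+\sum_{i\geq 1}(-1)^{i+1}a_i\,\zeta(i+1),\]
where the bracketed expression is a single rational coefficient of $1$. So $\tilde{n}_2(f)$ is again a $\mathbb{Q}$-linear combination of $\{1,\zeta(2),\zeta(3),\ldots\}$; linear independence first kills each $a_i$ with $i\geq 1$, and the bracketed rational then collapses to $a_0/2=0$, forcing $a_0=0$.

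For the reverse direction \eqref{Conject:2}$\Rightarrow$\eqref{Conjec:1}, I would simply invert the correspondence. Given any $\mathbb{Q}$-linear relation $c_0+\sum_{k\geq 2}c_k\zeta(k)=0$ with only finitely many nonzero $c_k$, set $a_0:=c_0$ and $a_i:=c_{i+1}$ for $i\geq 1$; then $\tilde{n}(\sum a_i\binom{x}{i})=0$, so injectivity of $\tilde{n}$ forces all $a_i=0$, hence all $c_k=0$. The analogous argument for $\tilde{n}_2$ is identical except that, after setting $a_i:=(-1)^{i+1}c_{i+1}$ for $i\geq 1$, one must choose $a_0\in\mathbb{Q}$ so that the bracketed rational above equals $c_0$; this is a well-defined finite rational adjustment because only finitely many $c_k$ are nonzero.

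There is no serious obstacle in the argument. The one place where a bit of care is needed is the treatment of the constant term: one must use $\tilde{n}(\binom{x}{0})=1$ and $\tilde{n}_2(\binom{x}{0})=1/2$ coming from the two Goldbach-type identities, rather than the formally ill-defined $\zeta(1)$. Conceptually, the theorem just records that both $\tilde{n}$ and $\tilde{n}_2$ are, after an explicit invertible $\mathbb{Q}$-linear change of basis, isomorphisms from $\mathbb{Q}\{\binom{x}{i}\}_{i\geq 0}$ onto the $\mathbb{Q}$-span of $\{1,\zeta(2),\zeta(3),\ldots\}$ if and only if the latter is $\mathbb{Q}$-linearly independent.
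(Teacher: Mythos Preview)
Your proof is correct and follows essentially the same approach as the paper's: both directions reduce to the observation that $\tilde{n}$ (and $\tilde{n}_2$) is a $\mathbb{Q}$-linear map whose action on the basis $\{\binom{x}{i}\}$ lands in the $\mathbb{Q}$-span of $\{1,\zeta(2),\zeta(3),\ldots\}$ via an invertible coefficient change, so injectivity is equivalent to linear independence of the target family. Your treatment is in fact more explicit than the paper's---you handle the $i=0$ term via the Goldbach-type identities and work out the index shift for the converse direction carefully, whereas the paper's argument is somewhat terser on these points.
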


\begin{proof}
Assuming~\ref{Conjec:1}.  Since the order polynomials are integer valued, from linear independence we deduce that if there are two polynomials in which $\tilde{n} (\tilde{n}_2)$ return the same value, each coordinate $\zeta(i+1)-1$ must have the same coefficient.
Now, for different polinomials, there is a coordinate in which the values of $\tilde{n}  (\tilde{n}_2)$ differ, otherwise the order polynomials agree on an infinite number of integers, but this implies that the polynomials are the same. Then \ref{Conject:2} holds.

Assuming~\ref{Conject:2}. Given a linear combination with rational coefficients, $\sum a_{i}{\zeta[i]}=0$. We can multiply by an integer to obtain integer coefficients: $\sum b_{i}{\zeta[i]}=0$. The preimage of this element $\sum b_i{x\choose i}$ belongs to the kernel of the map, which is zero. But from $\sum b_i{x\choose i}=0$ we deduce that $b_i=0$, due to linear independence of $\{{x\choose i}\}$.
\end{proof}


Regarding the first part of the previous theorem, Rivoal~\cite{infzeta} and Ball-Rivoal~\cite{indep} established that an infinite number of odd zeta values are linearly independent. Several recent works have made significant improvements on the Ball-Rivoal Theorem, including~\cite{infzetaoddnew, state, trascendental, oddzeta, elementary}. See also Remark~\ref{rmk:zs}.
\section{Further remarks}\label{Sec:pa}
\begin{itemize}
    \item Consider the following theorem:
\begin{theorem}
If $n$ is a natural number, then
\[\sum_{n=1}^{\infty} \frac{1}{n^k(n+1)^k}=\sum^k_{ n=0,\\ n\neq k-1 }(1+(-1)^{k-n})\zeta({k-n}){-k\choose n}.\]
\end{theorem}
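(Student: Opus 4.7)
The plan is to prove the identity by a partial fraction decomposition of $\frac{1}{n^k(n+1)^k}$ combined with careful term-by-term summation, then repackaging the leftover constant via $\zeta(0) = -\frac{1}{2}$. First I would expand $\frac{1}{(x+1)^k} = \sum_{m\geq 0}\binom{-k}{m}x^m$ and multiply by $x^{-k}$ to read off the principal part of $f(x) = \frac{1}{x^k(x+1)^k}$ at $x = 0$, namely $\sum_{j=1}^k \binom{-k}{k-j}x^{-j}$. The symmetry $f(x) = f(-1-x)$ gives the principal part at $x = -1$ for free, yielding
\[\frac{1}{n^k(n+1)^k} = \sum_{j=1}^k \binom{-k}{k-j}\left(\frac{1}{n^j} + \frac{(-1)^j}{(n+1)^j}\right).\]
The alternation $(-1)^j$ is what ultimately forces odd $j$ to contribute only constants while even $j$ produces the genuine $\zeta(j)$ terms.

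Next I would sum from $n=1$ to $\infty$. For each $j \geq 2$ both series converge individually, and the bracket evaluates to $1$ when $j$ is odd and to $2\zeta(j) - 1$ when $j$ is even. The case $j = 1$ is the delicate one: each series diverges, but the coefficients $\binom{-k}{k-1}$ and $-\binom{-k}{k-1}$ are exactly opposite, so the telescoping $\sum_{n\geq 1}\bigl(\tfrac{1}{n} - \tfrac{1}{n+1}\bigr) = 1$ salvages a finite contribution. Assembling the pieces, the sum becomes
\[\sum_{\substack{2\leq j\leq k\\ j\text{ even}}} 2\binom{-k}{k-j}\zeta(j) \;-\; \sum_{\substack{2\leq j\leq k\\ j\text{ even}}} \binom{-k}{k-j} \;+\; \sum_{\substack{1\leq j\leq k\\ j\text{ odd}}} \binom{-k}{k-j}.\]

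The final step is to show that the two constant sums collapse, via $\zeta(0) = -\tfrac{1}{2}$, into precisely the $n = k$ (i.e.\ $j = 0$) term of the right-hand side. The aggregate constant equals $\sum_{j=1}^k (-1)^{j+1}\binom{-k}{k-j}$, and after applying $\binom{-k}{m} = (-1)^m\binom{k+m-1}{m}$ followed by the hockey-stick identity $\sum_{m=0}^{k-1}\binom{k+m-1}{m} = \binom{2k-1}{k-1}$, it simplifies to $-\binom{-k}{k} = 2\zeta(0)\binom{-k}{k}$. Reindexing by $n = k - j$ — so that the excluded $j = 1$ becomes the excluded $n = k-1$, while the factor $1 + (-1)^{k-n}$ automatically kills the odd-$j$ terms — yields the stated expression $\sum_{n=0,\,n\neq k-1}^{k}(1 + (-1)^{k-n})\zeta(k-n)\binom{-k}{n}$.

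The hard part is the $j = 1$ case together with the constant-repackaging step: the partial fraction coefficients must be arranged so that the divergent single-series terms cancel exactly, and the residual constants must collapse into a single $2\zeta(0)\binom{-k}{k}$ term via the hockey-stick identity. Once these two coincidences are verified, the rest of the argument is purely notational bookkeeping.
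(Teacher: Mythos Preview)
Your argument is correct: the partial-fraction decomposition via the Taylor expansion of $(1+x)^{-k}$ and the symmetry $f(x)=f(-1-x)$ is sound, the $j=1$ telescoping is handled properly, and the hockey-stick computation does collapse the residual constant to $-\binom{-k}{k}=2\zeta(0)\binom{-k}{k}$, which is exactly the $n=k$ term on the right-hand side after reindexing.

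There is, however, nothing in the paper to compare against. The theorem appears only in the ``Further remarks'' section, where it is quoted without proof and attributed to \cite[Chapter~7, Entry~22]{R} and \cite{Series}; the authors then use it as motivation for a speculative question about poset interpretations of related coefficients. So your proof is not a reconstruction of anything the paper does---it is simply a valid proof of a result the paper merely cites. For what it is worth, the partial-fraction route you take is the classical one and is essentially what appears in the references the paper points to.

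One minor remark: the exclusion $n\neq k-1$ in the stated sum is there only to avoid writing the formally undefined product $0\cdot\zeta(1)$; as you implicitly note, the factor $1+(-1)^{k-n}$ already vanishes there, so no separate treatment is needed beyond your handling of the $j=1$ term.
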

This result can be found in  \cite[Chapter 7, Entry 22]{R},  
and \cite{Series}. Note that \[\sum_n \frac{1}{n^k(n+1)^k}=\sum_{n<n_2}\frac{1}{n^kn_2^k}-\sum_{n<n_2}\frac{1}{n^k(n_2+1)^k}.\] If we write the values \[\sum_{n_1<\cdots<n_r}\frac{1}{(n_1+c_1)^{k_1}\cdots (n_r+c_r)^{k_r}}, c_1\cdots,c_r\in\mathbb{N},\] as a sum of zeta values and a rational number, then we believe the coefficients should have a poset interpretation.

\item Is there a generating function for the coefficients $d_{P,i}$ where $P$ runs over all finite posets with $n$ points?
 
\end{itemize}

\section*{Acknowledgements}
We would like to express our gratitude to Antonio Arciniega-Nevarez and Marko Berghoff for their valuable feedback on earlier versions of this document. We also extend our thanks to Greta Panova and the anonymous reviewers for their insightful comments. We are especially grateful to Hansol Hong for his unwavering support and encouragement throughout the project. Additionally, we would like to acknowledge the support provided by the faculty members of Northwestern University and the CINVESTAV over the years. Their contributions have been invaluable to the development of this work. We used the software \cite{MATLAB,Mathematica} for several experiments.

\section{Funding}
The first author's research was made possible through the generous support of the National Research Foundation of Korea (NRF) grant funded by the Korean government (MSIT) (No. 2020R1C1C1A01008261). JLMC's research was supported by start-up funds from Michigan State University (MSU).

\bibliographystyle{abbrv}
\bibliography{references}

\end{document}